\documentclass[12pt, twoside]{article}
\usepackage{amsmath,amsthm,amssymb}
\usepackage{times}
\usepackage{enumerate}
\usepackage{bm}
\usepackage[all]{xy}
\usepackage{mathrsfs}
\usepackage{amscd}
\usepackage{color}
\pagestyle{myheadings}
\def\titlerunning#1{\gdef\titrun{#1}}
\makeatletter
\def\author#1{\gdef\autrun{\def\and{\unskip, }#1}\gdef\@author{#1}}
\def\address#1{{\def\and{\\\hspace*{18pt}}\renewcommand{\thefootnote}{}%
		\footnote {#1}}%
	\markboth{\autrun}{\titrun}}
\makeatother
\def\email#1{e-mail: #1}

\def\keywords#1{\par\medskip
	\noindent\textbf{Keywords.} #1}

\newtheorem{theorem}{Theorem}[section]
\newtheorem{corollary}[theorem]{Corollary}
\newtheorem{lemma}[theorem]{Lemma}
\newtheorem{proposition}[theorem]{Proposition}
\theoremstyle{definition}
\newtheorem{definition}[theorem]{Definition}
\newtheorem{remark}[theorem]{Remark}
\newtheorem{example}[theorem]{Example}
\numberwithin{equation}{section}

\newtheorem{conjecture}{Conjecture}

\frenchspacing

\textwidth=17.7cm
\textheight=23cm
\parindent=16pt
\oddsidemargin=-0.5cm
\evensidemargin=-0.5cm
\topmargin=-0.5cm

\xyoption{all}

\def \C {\mathbb{C}}

\def \a {\alpha }
\def \b {\beta}

\def \de {\delta}
\def \De {\Delta}
\def \la {\lambda}
\def \La {\Lambda}
\def\w {\omega}
\def\Om{\Omega}

\def\pa{\partial}
\def\na {\nabla}
\def\Ga{\Gamma}

\begin{document}
	
\baselineskip=17pt

\titlerunning{On Euler number of symplectic hyperbolic manifold}
\title{On Euler number of symplectic hyperbolic manifold}

\author{Teng Huang}

\date{}

\maketitle

\address{Teng Huang: School of Mathematical Sciences, University of Science and Technology of China ; CAS Key Laboratory of Wu Wen-Tsun Mathematics,  University of Science and Technology of China, Hefei, Anhui, 230026, People’s Republic of China; \email{htmath@ustc.edu.cn;htustc@gmail.com}}
\begin{abstract}
In this article, we introduce a class of closed $2n$-dimensional almost K\"{a}hler manifold $X$ which called the special symplectic hyperbolic manifold. Those manifolds include K\"{a}hler hyperbolic manifolds. We study the spaces of $L^{2}$-harmonic forms on the universal covering space of $X$. We then prove the Singer conjecture on special symplectic hyperbolic case. As an application, we can show that the Euler number of a special symplectic manifold satisfies the inequality $(-1)^{n}\chi(X)>0$. 
\end{abstract}
\keywords{Hopf conjecture, Singer conjecture, almost K\"{a}hler manifold, Euler number}
\section{Introduction}
Let us start the article by recalling two well-known conjectures related to the negativity of Riemannian sectional curvature. The first one, usually attributed to Hopf, is
\begin{conjecture}(Hopf Conjecture)
Let $X$ be a closed 2n-dimensional Riemannian manifold with sectional curvature $sec$. Then
\begin{equation*}
\left\{
\begin{aligned}
(-1)^{n}\chi(X)>0, &\ if\ sec<0\\
(-1)^{n}\chi(X)\geq0,&\ if\ sec\leq0.\\
\end{aligned}  
\right.
\end{equation*}	
\end{conjecture}
This is true for $n=1$ and $2$ as the Gauss–Bonnet integrands in these two low dimensional cases have the desired sign \cite{Chern}. However, in higher dimensions, it is known that the sign of the sectional curvature does not determine the sign of the Gauss-Bonnet-Chern integrand \cite{Ger}. A vanishing theorem in \cite{JX} which stated that the space of $L^{2}$ $k$-forms is trivial for $k$ in a certain range which depends on pinching constants for the curvature. For good pinching constants the question of Hopf can thus be answered.  

We denote by $h^{k}_{(2)}(X)$ the $k$-th $L^{2}$-Betti number of Riemannian manifold $X$. The second conjecture which proposed by Singer (\cite{Sin} and also \cite[Conjecture 2]{Dod}) is 
\begin{conjecture}(Singer Conjecture)
Let $X$ be a closed 2n-dimensional Riemannian manifold with negative sectional curvature. Then
\begin{equation*}
\left\{
\begin{aligned}
h^{k}_{(2)}(X)=0, & k\neq n\\
h^{n}_{(2)}(X)>0.& \\
\end{aligned}  
\right.
\end{equation*}	
\end{conjecture}
Because of the Euler-Poincar\'{e} formula $$\chi(X)=\sum_{k\geq0}(-1)^{k}h_{(2)}^{k}(X)$$
the Singer conjecture implies the Hopf conjecture in the case where $X$  has negative sectional curvature.

The program outlined above was carried out by Gromov \cite{Gro} when the manifold in question is K\"{a}hler and is homotopy equivalent to a closed manifold with strictly negative sectional curvatures.

A differential form $\a$ in a Riemannian manifold $(X,g)$ is called bounded with respect to the metric $g$ if the $L^{\infty}$-norm of $\a$ is finite, namely,
$$\|\a\|_{L^{\infty}(X)}=\sup_{x\in X}|\a(x)|<\infty.$$
By definition, a $k$-form $\a$ is said to be $d$(bounded) if $\a=d\b$, where $\b$ is a bounded $(k-1)$-form. It is obvious that if $X$ is compact, then every exact form is $d$(bounded). However, when $X$ is not compact, there exist smooth differential forms which are exact but not $d$(bounded). For instance, on $\mathbb{R}^{n}$, $\a=dx^{1}\wedge\cdots\wedge dx^{n}$ is exact, but it is not $d$(bounded).  Let’s recall some concepts introduced in \cite{CX,JZ}. A differential form $\a$ on a complete non-compact Riemannian manifold $(X,g)$ is called $d$(sublinear) if there exist a differential form $\b$ and a number $c>0$ such that $\a=d\b$ and 
$$\ |\b(x)|_{g}\leq c(1+\rho_{g}(x,x_{0})),$$  
where $\rho_{g}(x,x_{0})$ stands for the Riemannian distance between $x$ and a base point $x_{0}$ with respect to $g$.

Let $(X, g)$ be a closed Riemannian manifold and $\pi:(\tilde{X},\tilde{g})\rightarrow(X,g)$ be the universal covering with $\tilde{g}=\pi^{\ast}g$. A form $\a$ on $X$ is called $\tilde{d}$(bounded) (resp. $\tilde{d}$(sublinear)) if $\pi^{\ast}\a$ is a $d$(bounded) (resp. $d$(sublinear)) form on $(\tilde{X},\tilde{g})$. In geometry, various notions of hyperbolicity have been introduced, and the typical examples are manifolds with negative curvature in suitable sense \cite{CY}. The starting point for the present investigation is Gromov's notion of K\"{a}hler hyperbolicity \cite{Gro}. Gromov \cite{Gro} pointed out that if the Riemannian manifold $(X,g)$ is a complete simply-connected manifold and it has strictly negative sectional curvatures, then every smooth bounded closed form of degree $k\geq2$ is $d$(bounded). Then he proved the Hopf conjecture in the K\"{a}hler case. Gromov \cite{Gro} also gave a lower bound on the spectra of the Laplace operator $\De_{d}:=dd^{\ast}+d^{\ast}d$ on $L^{2}$-forms in $\Om^{p,q}(X)$ for $p+q\neq n$ to sharpen the Lefschetz vanishing theorem. In order to attack Hopf conjecture in the K\"{a}hler manifold with $sec\leq0$, by extending Gromov’s idea, Cao-Xavier \cite{CX} and Jost-Zuo  \cite{JZ}  independently introduced the concept of K\"{a}hler parabolicity,  which includes nonpositively curved closed K\"{a}hler manifolds, and showed that their Euler numbers have the desired property. In \cite{Hua2}, the author proved the Hopf conjecture in some locally conformally K\"{a}hler manifolds case. 

Let $(X,J,\w)$ be a closed $2n$-dimensional symplectic manifold. Let $J$ be an $\w$-compatible almost complex structure, i.e., $J^{2}=-id$, $\w(J\cdot, J\cdot)=\w(\cdot,\cdot)$,  and $g(\cdot,\cdot)=\w(\cdot, J\cdot)$ is a Riemannian metric on $X$. The triple $(\w, J, g)$ is called an almost K\"{a}hler structure on $X$. Notice that any one of the pairs $(\w, J)$, $(J, g)$ or $(g, \w)$ determines the other two. An almost-K\"{a}hler metric $g$ is K\"{a}hler if and only if $J$ is integrable. For symplectic case, inspired by K\"{a}hler geometry, Tan-Wang-Zhou \cite{TWZ} gave the definition of symplectic parabolic manifold.  A closed almost K\"{a}hler manifold $(X,J,\w)$ is called symplectic hyperbolic (resp. parabolic) if the lift $\tilde{\w}$ of $\w$ to the universal covering $(\tilde{X},\tilde{J},\tilde{\w})\rightarrow(X,J,\w)$ is $d$(bounded) (resp. $d$(sublinear)) on $(\tilde{X},\tilde{J},\tilde{\w})$.

Noticing that the proof of the vanishing theorem on K\"{a}hlerian case is based on the identity $[L,\De_{d}]=0$ due to the K\"{a}hler identities. But, in general, the complete almost K\"{a}hler manifold $(X,J,\w)$ is not K\"{a}hlerian. By considering Tseng-Yau’s new symplectic cohomologies on symplectic parabolic manifold, Tan-Wang-Zhou \cite{TWZ,HTan} proved that if $(X,J,\w)$ is a closed $2n$-dimensional symplectic parabolic manifold which satisfies the Hard Lefschetz Condition \cite{Mat,Yan}, then the Euler number of $X$ satisfies $(-1)^{n}\chi(X)\geq0$.  Hind-Tomassini \cite{HT} constructed a $d$(bounded) complete almost K\"{a}hler manifold $X$ satisfies $\mathcal{H}^{1}_{(2)}(X)\neq\{0\}$ by using methods of contact geometry. The Hard Lefschetz Condition is necessary in Tan-Wang-Zhou's theorem. Hence it is not enough to prove the Hopf conjecture in the Dodziuk-Singer's program using only the condition that $\w$ is $d$(bounded).

In this article, we observe that the operator $\De_{d'}=\frac{1}{4}(\De_{d}+\De_{d^{\La}})$ commutes to $L$, where the opearator $d'$ is defined in Section 3.1. We also observe that if $\dim\mathcal{H}^{k}_{d'}(X)=\dim\mathcal{H}^{k}_{d}(X)$ holds for any $0\leq k<n$ over a closed almost K\"{a}hler manifold $X$, then $X$ satisfies Hard Lefschetz Condition (see Corollary \ref{C1}). For non-compact case, we use Gromov's method to study the operator $d'+d'^{\ast}:\Om^{even}\rightarrow\Om^{odd}$ on a complete almost K\"{a}hler $2n$-manifold $X$ with $d$(bounded) symplectic $\w$. We then give a lower bound on the spectra of the  operator $\De_{d'}$ on the space of $L^{2}$-forms $\Om^{k}(X)$ for $k\neq n$ (see Theorem \ref{T4}).  In \cite{Gro}, Gromov proved that if $X$ is a closed K\"{a}hler hyperbolic manifold, then $(-1)^{n}\chi(X)>0$. Gromov's method amounted to construct arbitrarily small perturbations of $d+d^{\ast}$ on $\Om^{\ast}_{(2)}(X)$ with a non-trivial $L^{2}$-kernel. For this, he applied the $L^{2}$-index theorem to a twisted $d+d^{\ast}$, i.e., to vector valued differential forms. We observe that the index of $d'+d'^{\ast}:\Om^{even}\rightarrow\Om^{odd}$ is equal to the Euler number of $X$, that is, $Index(d'+d'^{\ast})=\chi(X)$ (see Corollary \ref{C2}). By the Atiyah's $L^{2}$ index, the $L^{2}$-index of $\widetilde{d'+d'^{\ast}}$ is equal to the index of $d'+d'^{\ast}$. We construct arbitrarily small perturbations of $\widetilde{d'+d'^{\ast}}$ on $\Om^{\ast}(\tilde{X})$ with a non trivial $L^{2}$-kernel. 

Let $(X,J,\w)$ be an almost K\"{a}hler manifold with a symplectic form $\w$. The symplectic form $\w$ is closed on $X$, this is equivalent to $g((\na_{X}J)Y,Z)+g((\na_{Z}J)X,Y)+g((\na_{Y}J)Z,X)=0$. We denote by $\tau^{\ast}$, $\tau$ the $\ast$-curvature and scalar curvature  of $X$, respectively.  There is a known identity $|\na J|^{2}=2(\tau^{\ast}-\tau)$ \cite{Sek,SV}. We introduce a class of closed almost K\"{a}hler manifolds as follows. 
\begin{definition}\label{D1}
A closed almost K\"{a}hler manifold $(X,J,\w)$ is called special symplectic hyperbolic if there is a bounded $1$-form $\theta$ on $\tilde{X}$ such that $\tilde{\w}=d\theta$ and 
\begin{equation}\label{E9}
\|\theta\|^{-2}_{L^{\infty}(\tilde{X})}\geq C\max_{x\in X}(\tau^{\ast}-\tau)(x),
\end{equation}
where $C$ is an uniformly positive constant only depend on $n$. 
\end{definition}
Following the identity $g(N_{J}(X,Y),Z)=2g(J(\na_{Z}J)X,Y)$, the Equation  (\ref{E9}) is equivalent to 
\begin{equation}\label{E10}
\|\theta\|^{-2}_{L^{\infty}(\tilde{X})}\geq \frac{C}{4}\|N_{J}\|^{2}_{L^{\infty}(X)},
\end{equation}
\begin{remark}
(1) A K\"{a}hler hyperbolic manifold is a special symplectic hyperbolic manifold since the almost complex structure $J$ is integrable, i.e, the Nijenhuis tensor is zero.\\
(2) Let $(X,J,\w)$  be a closed almost K\"{a}hler manifold with sectional curvature bounded from above by a negative constant, i.e., $sec\leq -K$ for some $K>0$. We denote by $(\tilde{X},\tilde{J},\tilde{\w})$ the universal covering space of $(X,J,\w)$. Since $\pi$ is local isometry, the sectional curvature of $\tilde{X}$ also bounded from above by the negative constant $-K$. By \cite[Lemma 3.2]{CY}, there exists $1$-form $\theta$ on $\tilde{X}$ such that  
$$\tilde{\w}=d\theta\ and\ \|\theta\|_{L^{\infty}(\tilde{X})}\leq \sqrt{n}K^{-\frac{1}{2}}.$$
If the sectional curvature $K$ of $X$ large enough to ensure that
$$K\geq nC\max_{x\in X}(\tau^{\ast}-\tau)(x),$$
then $X$ is a special symplectic  hyperbolic manifold. 
\end{remark} 
At first, we can give a  lower bound on the spectra of the Laplace operator $\De_{d'}:=\frac{1}{4}(\De_{d}+\De_{d^{\La}})$ on $L^{2}$ $k$-forms $\Om^{k}_{(2)}(X)$ for $k\neq n$. The main idea is we use the identity $[\De_{d}+\De_{d^{\La}},L]=0$.
\begin{theorem}\label{T4}
Let $(X,J,\w)$ be a complete 2n-dimensional almost K\"{a}her manifold with a $d$(bounded) symplectic form $\w$ i.e., there exists a bounded $1$-form $\theta$ such that $\w=d\theta$. Then every $L^{2}$ $k$-form $\a$ on $X$ of degree $k\neq n$ satisfies the inequality 
$$\langle(\De_{d}+\De_{d^{\La}})\a,\a\rangle\geq\la^{2}_{0}\langle\a,\a\rangle,$$
where $\la_{0}$ is a strictly positive constant which depends only on $n$ and the bounded of $\theta$,
$$\la_{0}\geq const_{n}\|\theta\|^{-1}_{L^{\infty}(X)}.$$
In particular, 
$$\mathcal{H}^{k}_{(2);d'}(X)=0,$$
unless $k\neq n$.
\end{theorem}
We then extend Gromov's idea to the special symplectic hyperbolic manifold case. We can prove the Hopf conjecture in our case.
\begin{theorem}\label{T1}(=Theorem \ref{T8})
Let $(X,J,\w)$ be a closed  $2n$-dimensional special symplectic hyperbolic manifold. Then the Euler number of $X$ satisfies $$(-1)^{n}\chi(X)>0.$$
\end{theorem}
As a corollary of Theorem\ref{T1}, we have the following result.
\begin{corollary}
Let $(X,J,\w)$ be a closed 2n-dimensional almost K\"{a}hler manifold. If the curvature of $X$ satisfies 
	$$sec\leq -K<0 \ and\  K\geq nC\max_{x\in X}(\tau^{\ast}-\tau)(x),$$
	then  the Euler number of $X$ satisfies
	$$(-1)^{n}\chi(X)>0,$$
where $C$ is an uniformly positive constant only depend on $n$. 
\end{corollary}
Suppose that the Nijenhuis tensor $N_{J}$ on a complete almost K\"{a}hler manifold $(X,J,\w)$ with a $d$(bounded) symplectic form satisfies $C\|N_{J}\|_{L^{\infty}(X)}\leq\|\theta\|^{-1}_{L^{\infty}(X)}$.  We then give a lower bound on the spectra of the Laplace operator $\De_{d}:=dd^{\ast}+d^{\ast}d$ on the space of $L^{2}$ $k$-forms $\Om^{k}_{(2)}(X)$ for $k\neq n$ (see Theorem \ref{T3}). For the non-vanishing result, we use the Gromov's idea in \cite{Gro}. Therefore we can prove the Singer conjecture under the special symplectic hyperbolic case.
\begin{theorem}\label{T2}
Let $(X,J,\w)$ be a closed $2n$-dimensional special symplectic hyperbolic manifold, $\pi:(\tilde{X},\tilde{J},\tilde{\w})\rightarrow (X,J,\w)$ the universal covering map for $X$. Then the spaces of $L^{2}$-harmonic $k$-forms on its universal covering space $\tilde{X}$ satisfy
	\begin{equation*}
	\left\{
	\begin{aligned}
	\mathcal{H}^{k}_{(2)}(\tilde{X})=\{0\}, & k\neq n\\
	\mathcal{H}^{n}_{(2)}(\tilde{X})\neq\{0\}, &  \\
	\end{aligned}  
	\right.
	\end{equation*}
	is equivalent to
	\begin{equation*}
	\left\{
	\begin{aligned}
	h^{k}_{(2)}(X)=0, & k\neq n\\
	h^{n}_{(2)}(X)>0.& \\
	\end{aligned}  
	\right.
	\end{equation*}
	In particular, $$(-1)^{n}\chi(X)>0.$$
\end{theorem}

\section{Preliminaries}
\subsection{Differential forms on almost K\"{a}hler manifold }
We recall some definitions and results on the differential forms for almost complex and almost Hermitian manifolds. Let $X$ be a $2n$-dimensional manifold (without boundary) and $J$ be a smooth almost complex structure on $X$. There is a natural action of $J$ on the space $\Om^{k}(X,\C):=\Om^{k}(X)\otimes\C$, which induces a topological type decomposition
$$\Om^{k}(X,\C)=\bigoplus_{p+q=k}\Om^{p,q}_{J}(X,\C),$$
where  $\Om^{p,q}_{J}(X,\C)$ denotes the space of complex forms of type $(p,q)$ with respect to $J$. We have that
$$d:\Om^{p,q}_{J}\rightarrow\Om^{p+2,q-1}_{J}\oplus\Om^{p+1,q}_{J}\oplus\Om^{p,q+1}_{J}\oplus\Om^{p-1,q+2}_{J}$$
and so the operator $d$ splits according as
$$d=A_{J}+\pa_{J}+\bar{\pa}_{J}+\bar{A}_{J},$$
where all the pieces are graded algebra derivations, $A_{J}$, $\bar{A}_{J}$ are  $0$-order differential operators. Note that each component of $d$ is a derivation, with bi-degrees given by 
$$|A_{J}|=(2,-1),\ |\pa_{J}|=(1,0),\ |\bar{\pa}_{J}|=(0,1),\ |\bar{A}_{J}|=(-1,2).$$
The integrability theorem of Newlander and Nirenberg states that the almost complex structure $J$ is integrable if and only if $N_{J}=0$, where
$$N_{J}:TX\otimes TX\rightarrow TX,$$
denotes the Nijenhuis tensor
$$N_{J}(X,Y):=[X,Y]+J[X,JY]+J[JX,Y]-[JX,JY].$$ 
For $\a\in\Om^{1}(X)$ we have
$$(A_{J}(\a)+\bar{A}_{J}(\a))(X,Y)=\frac{1}{4}\a(N_{J}(X,Y)),$$
In particular,  $J$ is integrable if only if $N_{J}=0$, i.e, $A_{J}=0$. Expanding the equation $d^{2}=0$ we obtain the following set of equations:
\begin{equation*}
\begin{split}
&A_{J}^{2}=0,\\
&A_{J}\pa_{J}+\pa_{J}A_{J}=0,\\
&\pa^{2}_{J}+A_{J}\bar{\pa}_{J}+\bar{\pa}_{J}A_{J}=0,\\
&\pa_{J}\bar{\pa}_{J}+\bar{\pa}_{J}\pa_{J}+A_{J}\bar{A}_{J}+\bar{A}_{J}A_{J}=0,\\
&\bar{\pa}^{2}_{J}+\bar{A}_{J}\pa_{J}+\pa_{J}\bar{A}_{J}=0,\\
&\bar{A}_{J}\bar{\pa}_{J}+\bar{\pa}_{J}\bar{A}_{J}=0.\\
&\bar{A}_{J}^{2}=0.\\
\end{split}
\end{equation*} 
\begin{definition}
An almost K\"{a}hler structure on $2n$-dimensional manifold $X$ is a pair
$(\w,J)$ where $\w$ is a symplectic form and $J$ is an almost complex structure calibrated by $\w$.
\end{definition}
If $(X,\w,J)$ is an almost K\"{a}hler manifold, then
$$g(X,Y)=\w(X,JY)$$
is a $J$-Hermitian metric, i.e., $g(JX,JY)=g(X,Y)$ for any $X,Y$.  For any almost K\"{a}hler manifold $(X,J,\w)$ there is an associated Hodge-star operator \cite{Huy}
$\ast:\Om^{p,q}_{J}\rightarrow\Om^{n-q,n-p}_{J}$ defined by 
$$\a\wedge\ast\bar{\b}=\langle\a,\b\rangle\frac{\w^{n}}{n!}.$$
The Lefschetz operator  $L:\Om^{p,q}_{J}\rightarrow\Om_{J}^{p+1,q+1}$ defined by
$$L(\a)=\w\wedge\a.$$
It has adjoint $\La=\ast^{-1}L\ast$. There is a Lefschetz decomposition on complex $k$-forms
$$\Om^{k}(X,\C)=\bigoplus_{r\geq0}L^{r}P_{\C}^{k-2r},$$
where $P_{\C}^{\bullet}=\ker{\La}\cap\Om^{\bullet}(X,\C)$. 

We recall some definitions on almost Hermitian manifold \cite{MOS,Sek}. We denote by $\na$, $R$, $\rho$, $\tau$ the Levi-Civita connection, the curvature tensor, the Ricci tensor and the scalar curvature of $X$, respectively. Here, we assume that the curvature $R$ is defined by $$R(X,Y)Z=[\na_{X},\na_{Y}]Z-\na_{[X,Y]}Z$$ 
for $X,Y,Z\in\mathfrak{X}(X)$. We denote by $\{X_{1},\cdots,X_{2n}\}$ a local orthonormal frame field of $X$. We have 
$R_{ijkl}=g(R(X_{i},X_{j})X_{k},X_{l})$, $J_{ij}=g(JX_{i},X_{j})$ and $\na_{i}J_{jk}=g((\na_{X_{i}}J)X_{j},X_{k})$. We denote by $\rho^{\ast}$ and $\tau^{\ast}$ the Ricci $\ast$-tensor and $\ast$-scalar curvature defined respectively by
$$\rho^{\ast}(X,Y)=g(Q^{\ast}X,Y)=trace(Z\mapsto R(X,JZ)JX),$$
$$\tau^{\ast}=traceQ^{\ast},$$
where $X,Y,Z\in\mathfrak{X}(X)$. By using the first Bianchi identity,
we have
$$\rho^{\ast}(X,Y)=-\frac{1}{2}\sum_{i=1}^{2n}R(X,JY,X_{i},JX_{i}),$$
and hence
$$\tau^{\ast}=-\frac{1}{2}\sum_{a,b,i,j=1}^{2n}J_{ab}R_{abij}J_{ij}.$$
The following equality is known 
\begin{proposition}(\cite[Lemma 2.4 ]{Sek} or \cite[Equation (3.1)--(3.3)]{SV})\label{P6}
Let $(X,J,\w)$ be a closed almost K\"{a}hler manifold. Then
$$|\na{J}|^{2}=2(\tau^{\ast}-\tau).$$
$$(\na_{X}J)Y+(\na_{JX}J)JY=0,$$ 
$$g(N_{J}(X,Y),Z)=2g(J(\na_{Z}J)X,Y),$$
for $X,Y,Z\in\mathfrak{X}(X)$
\end{proposition}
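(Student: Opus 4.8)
\emph{Proof plan.} Throughout I would write $\Phi(X,Y,Z):=g((\na_X J)Y,Z)$, which equals $(\na_X\w)(Y,Z)$ and is therefore skew in $(Y,Z)$, the almost K\"ahler condition $d\w=0$ being exactly the cyclic relation $\Phi(X,Y,Z)+\Phi(Y,Z,X)+\Phi(Z,X,Y)=0$. Two elementary facts are used repeatedly: since $g(J\cdot,J\cdot)=g$ the endomorphism $\na_X J$ is $g$-skew, and since $J^2=-\mathrm{id}$ it anticommutes with $J$, i.e. $(\na_X J)J=-J(\na_X J)$; the latter gives $\Phi(X,JY,Z)=\Phi(X,Y,JZ)$ and $\Phi(X,JY,JZ)=-\Phi(X,Y,Z)$. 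I would prove the second identity first, deduce the third from it together with the torsion-freeness of $\na$, and obtain the norm identity last by a Weitzenb\"ock argument.

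\emph{The second identity.} The plan is to apply the cyclic relation to the triple $(X,JY,JZ)$ and rewrite the three terms using $\Phi(X,JY,JZ)=-\Phi(X,Y,Z)$ and $\Phi(X,JY,Z)=\Phi(X,Y,JZ)$, obtaining $-\Phi(X,Y,Z)+\Phi(JY,JZ,X)+\Phi(JZ,JX,Y)=0$. Permuting $(X,Y,Z)$ cyclically yields three such relations; summing them and invoking the cyclic relation once more forces $\Phi(JX,JY,Z)=-\Phi(X,Y,Z)$, which paired against $Z$ is precisely $(\na_X J)Y+(\na_{JX}J)(JY)=0$. A consequence I would record, obtained by replacing $X$ with $-JX$, is that $\Phi(JX,Y,Z)=\Phi(X,JY,Z)=\Phi(X,Y,JZ)$, so a single $J$ moves freely among the three slots without sign change.

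\emph{The Nijenhuis identity.} Since $\na$ is torsion-free, $[A,B]=\na_A B-\na_B A$. Substituting this into the definition of $N_J$ and expanding every $\na(J\cdot)=(\na J)\cdot+J\na(\cdot)$, all terms involving $\na$ of vector fields alone cancel, leaving $N_J(X,Y)=J(\na_X J)Y-J(\na_Y J)X-(\na_{JX}J)Y+(\na_{JY}J)X$. The second identity gives $(\na_{JX}J)Y=-J(\na_X J)Y$, whence $N_J(X,Y)=2J\big[(\na_X J)Y-(\na_Y J)X\big]$. Pairing with $Z$ and using the $g$-skewness of $J$ together with the cyclic relation to transfer the covariant derivative onto the $Z$ slot then yields the stated $g(N_J(X,Y),Z)=2g(J(\na_Z J)X,Y)$.

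\emph{The norm identity.} This is the only step that involves curvature. First I would note that $\w$ is coclosed: from $\ast\w=\w^{n-1}/(n-1)!$ and $d\w=0$ one gets $\de\w=-\ast d\ast\w=0$, so $\w$ is harmonic. The pointwise Weitzenb\"ock formula $\De_d\w=\na^\ast\na\w+\mathcal{W}(\w)$ then gives $\na^\ast\na\w=-\mathcal{W}(\w)$. Pairing with $\w$ and using that $|\w|^2\equiv n$ is constant (so that $\sum_i\langle\na_i\na_i\w,\w\rangle=-|\na\w|^2$) turns the left side into $|\na\w|^2=|\na J|^2$, leaving $|\na J|^2=-\langle\mathcal{W}(\w),\w\rangle$. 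The remaining task is to evaluate the right side: the Ricci part of $\mathcal{W}$ contracted with $\w=J$ collapses through $J^2=-\mathrm{id}$ to a multiple of $\tau$, and the Riemann part, after the first Bianchi identity and the definition $\tau^\ast=-\tfrac12 J_{ab}R_{abij}J_{ij}$, collapses to a multiple of $\tau^\ast$, producing $|\na J|^2=2(\tau^\ast-\tau)$. I expect this final curvature bookkeeping to be the only genuine obstacle, since it requires contracting the full Riemann tensor against two copies of $J$ in two distinct index patterns and correctly applying the first Bianchi identity to recognize the result as $\tau^\ast$ with the right constant; the first two identities are purely tensorial and should fall out cleanly.
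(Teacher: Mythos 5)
The paper does not prove this proposition at all --- it is quoted from Sekigawa and Sekigawa--Vanhecke --- so there is no internal argument to compare yours against; what you propose is a genuine self-contained derivation, and it is essentially the standard one from the literature. Your treatment of the second identity is correct and complete: applying the cyclic relation ($d\w=0$) to $(X,JY,JZ)$, rewriting via $\Phi(X,JY,Z)=\Phi(X,Y,JZ)$, summing the three cyclic permutations and subtracting back does force $\Phi(JX,JY,Z)=-\Phi(X,Y,Z)$. The expansion of $N_J$ via torsion-freeness is also correct and yields $N_J(X,Y)=2J\bigl[(\na_XJ)Y-(\na_YJ)X\bigr]$ with the paper's definition of $N_J$. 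One caveat you should record: carrying your own steps to the end ($g(J(\na_XJ)Y,Z)=-\Phi(X,Y,JZ)$, skew-symmetry in the last two slots, then the cyclic relation) gives $g(N_J(X,Y),Z)=2\Phi(JZ,X,Y)=-2g(J(\na_ZJ)X,Y)$, i.e.\ the \emph{opposite} sign to the one stated. This is a convention mismatch in the paper itself --- the cited sources define the Nijenhuis tensor as $[JX,JY]-[X,Y]-J[JX,Y]-J[X,JY]$, the negative of the definition given here --- and it is harmless for the paper, which only ever uses $\|N_J\|$, but your write-up should not claim to land on the stated sign when it does not.

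For the norm identity, the mechanism you choose (harmonicity of $\w$ from $\ast\w=\w^{n-1}/(n-1)!$, the Weitzenb\"ock formula on $2$-forms, and $|\w|^2\equiv n$ to make the identity pointwise) is exactly how this is proved in the cited references, and your identification of the Ricci contraction with $\tau$ and of the Riemann contraction, after the first Bianchi identity, with $\tau^*$ is the right shape of the computation. This last step is, however, only a sketch: the factor $2$ depends on whether one uses the tensor norm or the $2$-form norm ($\tfrac12\a_{ij}\a_{ij}$) and on the normalization of the Weitzenb\"ock curvature term, and until that contraction is written out the constant in $|\na J|^2=2(\tau^*-\tau)$ is not actually verified. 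So: the first two identities are fully proved, the third is proved up to a sign traceable to the paper's own convention, and the norm identity is a correct plan with the decisive bookkeeping still to be done.
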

\begin{example}
Let $(X,J,\w)$ be a closed almost K\"{a}hler manifold. We denote by $\{Z_{i}\}$ the orthonormal basis of $T_{p}^{1,0}$, $p\in X$. We extend the curvature operator $R: \wedge^{2}T_{p}X\rightarrow\wedge^{2}T_{p}X$ to a complex linear transformation $R^{\C}: \wedge^{2}T_{p}X\otimes\C\rightarrow\wedge^{2}T_{p}X\otimes\C$ \cite{Her91}. Given a nonzero decomposition $\Pi\in\La^{2}T_{p}X\otimes\C$, its complex sectional curvature is the real number
$$K^{\C}(\Pi)=\frac{(R^{\C}(\Pi),\overline{\Pi})}{(\Pi,\overline{\Pi})}$$
(one also can see \cite[Definition 2.2]{Her91}). Then following \cite[Lemma 3.3]{Her00}, we have
\begin{equation*}
\tau^{\ast}-\tau=-4\sum_{i,j=1}^{n}(R^{\C}(Z_{i}\wedge Z_{j}),\overline{Z_{i}\wedge Z_{j}}).
\end{equation*}
If the curvature of $X$ satisfies 
$$K^{\C}\geq -\frac{K}{4n^{3}C},$$
then $$Cn(\tau^{\ast}-\tau)\leq K.$$
It implies that $X$ is a special symplectic hyperbolic. 
\end{example}		
\subsection{Almost K\"{a}hler identities}
In \cite{CW1,CW2}, the authors extended the K\"{a}hler identities to the non-integrable setting and deduced several geometric and topologies consequences. In this section, we will recall almost K\"{a}hler identities constructed by Cirici-Wilson \cite{CW1}.

The operators $\de=A_{J}, \pa_{J}, \bar{\pa}_{J}, \bar{A}_{J}$ and $\de$ have $L^{2}$-adjoint operators $\de^{\ast}$ when $X$ is closed, and one can check that
$$\bar{A}^{\ast}_{J}=-\ast A_{J}\ast\quad and\quad \bar{\pa}^{\ast}_{J}=-\ast\pa_{J}\ast.$$
For any almost K\"{a}hler manifold, there is a $\mathbb{Z}_{2}$-graded Lie algebra of operators action on the $(p,q)$-forms, generated by eight odd operators
$$\bar{\pa}_{J},\pa_{J},\bar{A}_{J},A_{J},\bar{\pa}^{\ast}_{J},\pa^{\ast}_{J},\bar{A}^{\ast}_{J},A^{\ast}_{J}$$
and even degree operators $L,\La,H$ \cite[Section 3]{CW1}.

Cirici-Wilson \cite{CW1,CW2} defined the $\de$-Laplacian by letting
$$\De_{\de}:=\de\de^{\ast}+\de^{\ast}\de.$$
For all $p,q$, we will denote by
$$\mathcal{H}^{p,q}_{\de}:=\ker{\De_{\de}}\cap\Om^{p,q}_{J}=\ker{\de}\cap\ker{\de^{\ast}}\cap\Om^{p,q}_{J}$$
the space of $\de$-harmonic forms in bi-degree $(p,q)$.

By introducing a symplectic Hodge star operator $\ast_{s}$,  Brylinski \cite{Bry} proposed a Hodge theory of closed symplectic manifolds. The space of symplectic harmonic $k$-forms is $\mathcal{H}^{k}_{sym}:=\ker{d}\cap\ker{d^{\La}}\cap\Om^{k}$, where $$d^{\La}:=[d,\La]=(-1)^{k+1}\ast_{s}d\ast_{s}.$$ 
Given any compatible triple $(X,J,\w)$ on a symplectic manifold, the differential operator $d^{\La}=[d,\La]$ and the $d^{c}$ operator
$$d^{c}:=J^{-1}dJ,$$
are related via the Hodge star operator defined with respect to the compatible metric $g$ by the relation
$$d^{\La}=d^{c\ast}:=-\ast d^{c}\ast.$$
Brylinski also showed that in almost K\"{a}hler manifold, a form of pure $(p,q)$ is in $\mathcal{H}^{p+q}_{sym}$ if only if it is in $\mathcal{H}^{p+q}_{d}$. This gives an inclusion $\bigoplus_{p+q=k}\mathcal{H}^{p,q}_{d}\hookrightarrow\mathcal{H}^{p+q}_{sym}$. In general, this is strict. Indeed, Yan \cite{Yan} showed that $k=0,1,2$, every cohomology class has a symplectic harmonic representative.

We define the graded commutator of operators $A$ and $B$ by
$$[A,B]=AB-(-1)^{deg(A)deg(B)}BA$$
where $deg(A)$ denotes the total degree of $A$. 
\begin{proposition}(\cite[Proposition 3.1]{CW1})\label{P2}
For any almost K\"{a}hler manifold the following identities hold:\\
(1) $[L,\bar{A}_{J}]=[L,A_{J}]=0$ and $[\La,\bar{A}^{\ast}_{J}]=[\La,A^{\ast}_{J}]=0$.\\
(2)	$[L,\bar{\pa}_{J}]=[L,\pa_{J}]=0$ and $[\La,\bar{\pa}^{\ast}_{J}]=[\La,\pa^{\ast}_{J}]=0$.\\
(3) $[L,\bar{A}^{\ast}_{J}]=\sqrt{-1}A_{J}$, $[L,A^{\ast}_{J}]=-\sqrt{-1}\bar{A}_{J}$ and   $[\La,\bar{A}_{J}]=\sqrt{-1}A^{\ast}_{J}$, $[\La,A_{J}]=-\sqrt{-1}\bar{A}^{\ast}_{J}$.\\
(4) $[L,\bar{\pa}^{\ast}_{J}]=-\sqrt{-1}\pa_{J}$, $[L,\pa^{\ast}_{J}]=\sqrt{-1}\bar{\pa}_{J}$ and   $[\La,\bar{\pa}_{J}]=-\sqrt{-1}\pa^{\ast}_{J}$, $[\La,\pa_{J}]=\sqrt{-1}\bar{\pa}^{\ast}_{J}$.
\end{proposition}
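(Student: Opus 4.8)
The plan is to organise the statement into three tiers: the commutators of $L$ (and, by adjunction, of $\La$) with the four components of $d$ in (1) and (2); the order-zero identities for $A_J,\bar{A}_J$ in (3); and the first-order identities for $\pa_J,\bar{\pa}_J$ in (4). The first tier is immediate from $d\w=0$, and I will reduce each of (3) and (4) to a single representative identity by exploiting two symmetries, graded adjunction and conjugation by $\ast$, leaving only one genuinely order-zero and one genuinely first-order computation to perform.

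First I would prove (1) and (2). Since $\w$ has type $(1,1)$, the four pieces $A_J\w,\pa_J\w,\bar{\pa}_J\w,\bar{A}_J\w$ of $d\w$ have bidegrees $(3,0),(2,1),(1,2),(0,3)$; as $d\w=0$, each vanishes separately. Writing $P$ for any one of $A_J,\pa_J,\bar{\pa}_J,\bar{A}_J$ and using that $P$ is a derivation while $\w$ is even, one gets $[L,P]\a=\w\wedge P\a-P(\w\wedge\a)=-(P\w)\wedge\a=0$, which is the first half of (1) and (2). Taking graded adjoints via $[A,B]^{\ast}=[B^{\ast},A^{\ast}]$ and $L^{\ast}=\La$ turns each $[L,P]=0$ into $[\La,P^{\ast}]=0$, giving the second halves.

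Next I would set up the reductions for (3) and (4). Graded adjunction sends $[L,\bar{A}^{\ast}_J]=\sqrt{-1}A_J$ to $[\La,\bar{A}_J]=\sqrt{-1}A^{\ast}_J$ and $[L,A^{\ast}_J]=-\sqrt{-1}\bar{A}_J$ to $[\La,A_J]=-\sqrt{-1}\bar{A}^{\ast}_J$, and likewise pairs the four identities of (4). Conjugation by $\ast$, using $\bar{A}^{\ast}_J=-\ast A_J\ast$, $\bar{\pa}^{\ast}_J=-\ast\pa_J\ast$ together with $\ast L=\La\ast$ and $L\ast=\ast\La$ (both consequences of $\La=\ast^{-1}L\ast$), yields $[L,\bar{A}^{\ast}_J]=-\ast[\La,A_J]\ast$ and $[L,\bar{\pa}^{\ast}_J]=-\ast[\La,\pa_J]\ast$. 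These relations link all four identities in (3) into one web and all four in (4) into another, so it suffices to prove the single order-zero identity $[\La,\bar{A}_J]=\sqrt{-1}A^{\ast}_J$ and the single first-order identity $[\La,\bar{\pa}_J]=-\sqrt{-1}\pa^{\ast}_J$.

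For the order-zero identity I would fix a point and a local unitary coframe, in which $L,\La$ act by the standard symplectic $\mathfrak{sl}_2$-representation on the exterior algebra and, using $g(N_J(X,Y),Z)=2g(J(\na_ZJ)X,Y)$ from Proposition \ref{P6}, $A_J,\bar{A}_J$ become explicit wedge-and-contract operators determined by the Nijenhuis tensor; matching bidegrees and structure constants then verifies $[\La,\bar{A}_J]=\sqrt{-1}A^{\ast}_J$. The first-order identity $[\La,\bar{\pa}_J]=-\sqrt{-1}\pa^{\ast}_J$ is the crux. Its principal symbol is computed exactly as in the integrable K\"{a}hler case, since the symbol sees only $\w$ and $g$ at the point and not their derivatives, and reproduces $-\sqrt{-1}\pa^{\ast}_J$; thus $[\La,\bar{\pa}_J]+\sqrt{-1}\pa^{\ast}_J$ is an order-zero operator of bidegree $(-1,0)$. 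The main obstacle is to show this residual operator vanishes: its coefficients involve $\na\w$ and the torsion of the almost-Hermitian structure, and this is precisely where almost-K\"{a}hlerness is indispensable. The condition $d\w=0$ annihilates the torsion contribution of $\w$, while the remaining non-integrability is carried by the Nijenhuis tensor, whose pieces have bidegrees $(2,-1)$ and $(-1,2)$ and therefore cannot contribute to a bidegree-$(-1,0)$ operator; hence the residue is zero and the identity follows.
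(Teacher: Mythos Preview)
The paper does not give its own proof of this proposition; it is quoted from Cirici--Wilson \cite{CW1} without argument, so there is no in-paper proof to compare your attempt against.

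Your outline is sound. The derivation of (1) and (2) from $d\w=0$ together with the derivation property of each component of $d$ is correct, and the adjunction and $\ast$-conjugation symmetries do collapse (3) and (4) to one representative each (complex conjugation, using $\overline{A_J}=\bar A_J$, $\overline{\pa_J}=\bar\pa_J$, $\overline{L}=L$, $\overline{\La}=\La$, would serve equally well and spares you the sign-tracking with $\ast$). The one step that deserves tightening is the last paragraph. Saying that the Nijenhuis pieces ``have bidegrees $(2,-1)$ and $(-1,2)$ and therefore cannot contribute to a bidegree-$(-1,0)$ operator'' conflates the operators $A_J,\bar A_J$ with the totality of natural zero-order operators built from $N_J$. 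The clean version of your claim is: the residual is a natural zero-order operator linear in the intrinsic torsion $\na J$, which on an almost K\"{a}hler manifold is carried entirely by $N_J$; after metric-dualising, $N_J$ sits in bidegree $(3,0)\oplus(0,3)$, and since every contraction or wedge with $g$ or $\w$ shifts bidegree diagonally by $(\pm1,\pm1)$, any operator so built has bidegree $(a,b)$ with $a-b\in\{3,-3\}$, hence never $(-1,0)$. With that refinement the argument is complete.

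A shorter alternative route, closer in spirit to what is usually done, avoids the invariant-theory step altogether: on any symplectic manifold Brylinski's symplectic star gives $[d,\La]=(-1)^{k+1}\ast_{s}d\ast_{s}$, a purely symplectic identity, and on an almost K\"{a}hler manifold $\ast_{s}$ and the Riemannian $\ast$ differ exactly by the action of $J$ on forms. Combining these yields the single real identity $[d,\La]=(d^{c})^{\ast}$ with $d^{c}=J^{-1}dJ$, and reading off bidegree components gives all of (3) and (4) at once.
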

If $C$ is another operator of degree $deg(C)$, the following Jacobi identity is easy to check
$$(-1)^{deg(C)deg(A)}\big{[}A,[B,C]\big{]}+(-1)^{deg(A)deg(B)}\big{[}B,[C,A]\big{]}+(-1)^{deg(B)deg(C)}\big{[}C,[A,B]\big{]}=0.$$
\begin{proposition}(\cite[Proposition 3.3]{CW1})\label{P3}
For any almost K\"{a}hler manifold the following identities hold:\\
	(1) $[\bar{A}_{J},A^{\ast}_{J}]=[A_{J},\bar{A}^{\ast}_{J}]=0$.\\
	(2)	$[\bar{A}_{J},\pa^{\ast}_{J}]=[\bar{\pa}_{J},A^{\ast}_{J}]$ and $[A_{J},\bar{\pa}^{\ast}_{J}]=[\pa_{J},\bar{A}^{\ast}_{J}]$.\\
	(3) $[\pa_{J},\bar{\pa}^{\ast}_{J}]=[\bar{A}^{\ast}_{J},\bar{\pa}_{J}]+[A_{J},\pa^{\ast}_{J}]$ and $[\bar{\pa}_{J},\pa^{\ast}_{J}]=[A^{\ast}_{J},\pa_{J}]+[\bar{A}_{J},\bar{\pa}^{\ast}_{J}]$.
\end{proposition}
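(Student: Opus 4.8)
The plan is to derive these second-order commutator identities from the first-order identities of Proposition \ref{P2} by repeatedly invoking the graded Jacobi identity together with the anticommutation relations coming from the expansion of $d^{2}=0$. A preliminary observation organizes the work: within each of the three items the two displayed identities are formal $L^{2}$-adjoints of one another. Indeed, since $A_{J},\bar{A}_{J},\pa_{J},\bar{\pa}_{J}$ are all odd, their graded commutators are anticommutators, and taking adjoints of an anticommutator of odd operators returns the anticommutator of the adjoints (using $\bar{A}^{\ast}_{J}=-\ast A_{J}\ast$ and $\bar{\pa}^{\ast}_{J}=-\ast\pa_{J}\ast$). For instance, the adjoint of $[\bar{A}_{J},\pa^{\ast}_{J}]=[\bar{\pa}_{J},A^{\ast}_{J}]$ is precisely $[A_{J},\bar{\pa}^{\ast}_{J}]=[\pa_{J},\bar{A}^{\ast}_{J}]$. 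Hence it suffices to prove the first identity in each of (1), (2), (3); the second then follows by adjunction.

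For (1), I would write $A^{\ast}_{J}=-\sqrt{-1}[\La,\bar{A}_{J}]$ from Proposition \ref{P2}(3) and compute $[\bar{A}_{J},A^{\ast}_{J}]=-\sqrt{-1}[\bar{A}_{J},[\La,\bar{A}_{J}]]$. Applying the graded Jacobi identity to the triple $(\bar{A}_{J},\La,\bar{A}_{J})$ and using $\bar{A}^{2}_{J}=0$ (so that the anticommutator $[\bar{A}_{J},\bar{A}_{J}]$ vanishes) collapses the three terms to $-2[\bar{A}_{J},[\La,\bar{A}_{J}]]=0$, giving $[\bar{A}_{J},A^{\ast}_{J}]=0$. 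For (2), I would use $\pa^{\ast}_{J}=\sqrt{-1}[\La,\bar{\pa}_{J}]$ from Proposition \ref{P2}(4) and apply Jacobi to $(\bar{A}_{J},\La,\bar{\pa}_{J})$: the relation $\bar{A}_{J}\bar{\pa}_{J}+\bar{\pa}_{J}\bar{A}_{J}=0$ from $d^{2}=0$ annihilates the middle term, while $[\La,\bar{A}_{J}]=\sqrt{-1}A^{\ast}_{J}$ of Proposition \ref{P2}(3) identifies the surviving term, and reading off the result yields $[\bar{A}_{J},\pa^{\ast}_{J}]=[\bar{\pa}_{J},A^{\ast}_{J}]$.

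The essential new feature appears in (3), where the nilpotency exploited in (1)--(2) fails. Writing $\bar{\pa}^{\ast}_{J}=\sqrt{-1}[\La,\pa_{J}]$ and applying Jacobi to $(\pa_{J},\La,\pa_{J})$ produces a term $[\La,\pa^{2}_{J}]$ that no longer vanishes: from $d^{2}=0$ one has $\pa^{2}_{J}=-(A_{J}\bar{\pa}_{J}+\bar{\pa}_{J}A_{J})$. The plan is then to expand $[\La,A_{J}\bar{\pa}_{J}+\bar{\pa}_{J}A_{J}]$ by the graded Leibniz rule and substitute the first-order brackets $[\La,A_{J}]=-\sqrt{-1}\bar{A}^{\ast}_{J}$, $[\La,\bar{\pa}_{J}]=-\sqrt{-1}\pa^{\ast}_{J}$, and $[\La,\bar{A}_{J}]=\sqrt{-1}A^{\ast}_{J}$ of Proposition \ref{P2}(3)--(4); after regrouping, these correction terms assemble exactly into $[\bar{A}^{\ast}_{J},\bar{\pa}_{J}]+[A_{J},\pa^{\ast}_{J}]$, the claimed right-hand side.

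I expect the main obstacle to be the bookkeeping in this last step: correctly tracking the $\mathbb{Z}_{2}$-graded signs in every application of the Jacobi and Leibniz rules, and matching the expanded correction terms against the target one by one. One must also read Proposition \ref{P2} with the evident correction that the final bracket of item (3) is $[\La,A_{J}]=-\sqrt{-1}\bar{A}^{\ast}_{J}$ (rather than the misprinted repetition of $[L,A_{J}]$), since this is precisely the relation feeding the $A_{J}$-correction in (3). Once the sign conventions are fixed consistently, each identity reduces to a short mechanical verification.
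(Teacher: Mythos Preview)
Your approach is correct and is precisely the standard one: use the graded Jacobi identity to pull the first-order identities of Proposition~\ref{P2} up to the second-order commutators, feeding in the relations from $d^{2}=0$ where needed, and observe that the two halves of each item are adjoint to one another. The paper does not supply its own proof of Proposition~\ref{P3}; it simply cites \cite[Proposition~3.2]{CW1}, whose argument is exactly the Jacobi-identity computation you describe. One minor slip: with the (corrected) identity $[\La,\pa_{J}]=\sqrt{-1}\,\bar{\pa}^{\ast}_{J}$ one has $\bar{\pa}^{\ast}_{J}=-\sqrt{-1}\,[\La,\pa_{J}]$, not $+\sqrt{-1}\,[\La,\pa_{J}]$ as you wrote; this is harmless for the overall scheme and is exactly the kind of sign bookkeeping you already flagged as the main hazard.
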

In \cite{CW1}, they also gave several relations concerning various Laplacians.
\begin{proposition}(\cite[Proposition 3.4]{CW1})\label{P7}
For any almost K\"{a}hler manifold the following identities hold:\\
(1) $\De_{\bar{A}_{J}+A_{J}}=\De_{\bar{A}_{J}}+\De_{A_{J}}$.\\
(2)	$\De_{\bar{\pa}_{J}}+\De_{A_{J}}=\De_{\pa_{J}}+\De_{\bar{A}_{J}}.$\\
(3) $\De_{d}=2(\De_{\bar{\pa}_{J}}+\De_{A_{J}}+[\bar{A}_{J},\pa_{J}^{\ast}]+[A_{J},\bar{\pa}_{J}^{\ast}]+[\pa_{J},\bar{\pa}_{J}^{\ast}]+[\bar{\pa}_{J},\pa_{J}^{\ast}])$.
\end{proposition}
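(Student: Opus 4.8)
All three identities are formal consequences of the bracket relations in Propositions \ref{P2} and \ref{P3}, the relations obtained by expanding $d^{2}=0$, and the graded Jacobi identity, so the plan is to derive them purely algebraically in the order (1), (2), (3). For (1), I would put $\de=\bar{A}_{J}+A_{J}$, so that $\de^{\ast}=\bar{A}_{J}^{\ast}+A_{J}^{\ast}$, and expand
$$\De_{\de}=[\de,\de^{\ast}]=[\bar{A}_{J},\bar{A}_{J}^{\ast}]+[A_{J},A_{J}^{\ast}]+[\bar{A}_{J},A_{J}^{\ast}]+[A_{J},\bar{A}_{J}^{\ast}].$$
Since $\bar{A}_{J}$ and $A_{J}$ are odd, every bracket is an anticommutator, so the first two terms give $\De_{\bar{A}_{J}}+\De_{A_{J}}$, while the last two vanish by Proposition \ref{P3}(1). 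This is exactly (1).

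For (2), the plan is to show that the two differences $\De_{\bar{\pa}_{J}}-\De_{\pa_{J}}$ and $\De_{\bar{A}_{J}}-\De_{A_{J}}$ coincide with the single operator $\sqrt{-1}\,[\La,[A_{J},\bar{A}_{J}]]$. Using Proposition \ref{P2}(4) to write $\bar{\pa}_{J}^{\ast}=-\sqrt{-1}\,[\La,\pa_{J}]$, I would expand $\De_{\bar{\pa}_{J}}=-\sqrt{-1}\,[\bar{\pa}_{J},[\La,\pa_{J}]]$ and apply the graded Jacobi identity to the inner double bracket. In the resulting expression the term $[\pa_{J},\bar{\pa}_{J}]$ is replaced by $-[A_{J},\bar{A}_{J}]$ via the bidegree $(1,1)$ component of $d^{2}=0$, and the term $[\bar{\pa}_{J},\La]$ equals $\sqrt{-1}\,\pa_{J}^{\ast}$ by Proposition \ref{P2}(4); collecting these yields $\De_{\bar{\pa}_{J}}-\De_{\pa_{J}}=\sqrt{-1}\,[\La,[A_{J},\bar{A}_{J}]]$. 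Repeating the identical manipulation with $(\pa_{J},\bar{\pa}_{J})$ replaced by $(A_{J},\bar{A}_{J})$, now invoking Proposition \ref{P2}(3) in place of (4), gives $\De_{\bar{A}_{J}}-\De_{A_{J}}=\sqrt{-1}\,[\La,[A_{J},\bar{A}_{J}]]$ as well. Equating and rearranging produces $\De_{\bar{\pa}_{J}}+\De_{A_{J}}=\De_{\pa_{J}}+\De_{\bar{A}_{J}}$, which is (2).

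For (3), I would expand $\De_{d}=[d,d^{\ast}]$ with $d=A_{J}+\pa_{J}+\bar{\pa}_{J}+\bar{A}_{J}$ into its sixteen brackets and sort them by bidegree. The four diagonal brackets give $\De_{A_{J}}+\De_{\pa_{J}}+\De_{\bar{\pa}_{J}}+\De_{\bar{A}_{J}}$, which collapses to $2(\De_{\bar{\pa}_{J}}+\De_{A_{J}})$ by part (2). Among the off-diagonal brackets, the two of bidegree $(\pm 3,\mp 3)$ vanish by Proposition \ref{P3}(1); the bidegree $(2,-2)$ pair equals $[A_{J},\bar{\pa}_{J}^{\ast}]+[\pa_{J},\bar{A}_{J}^{\ast}]=2[A_{J},\bar{\pa}_{J}^{\ast}]$ and the bidegree $(-2,2)$ pair equals $2[\bar{A}_{J},\pa_{J}^{\ast}]$ by Proposition \ref{P3}(2); and each of the two triples of bidegree $(1,-1)$ and $(-1,1)$ collapses to $2[\pa_{J},\bar{\pa}_{J}^{\ast}]$ and $2[\bar{\pa}_{J},\pa_{J}^{\ast}]$ respectively after substituting the two relations of Proposition \ref{P3}(3) and using that the bracket of two odd operators is symmetric. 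Summing the diagonal and off-diagonal contributions and factoring out $2$ gives precisely
$$\De_{d}=2\big(\De_{\bar{\pa}_{J}}+\De_{A_{J}}+[\bar{A}_{J},\pa_{J}^{\ast}]+[A_{J},\bar{\pa}_{J}^{\ast}]+[\pa_{J},\bar{\pa}_{J}^{\ast}]+[\bar{\pa}_{J},\pa_{J}^{\ast}]\big),$$
which is (3).

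The routine but delicate part, which I expect to be the main obstacle, is the sign and $\sqrt{-1}$ bookkeeping throughout: one must track the graded signs in every commutator and in the Jacobi identity, and use the correct normalizations of the almost-K\"{a}hler identities of Proposition \ref{P2} (in particular distinguishing the brackets with $L$ from those with $\La$), since a single misplaced factor of $\sqrt{-1}$ in step (2) would corrupt the diagonal reduction in (3). Checking that the two triples in (3) genuinely collapse to the advertised single brackets is the one place where the full content of Proposition \ref{P3}(3) is used.
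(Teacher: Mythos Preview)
Your proposal is correct. The paper does not give its own proof of this proposition---it is quoted verbatim from \cite[Proposition 3.3]{CW1}---so there is nothing in the present paper to compare against; your argument via the graded Jacobi identity, Propositions \ref{P2} and \ref{P3}, and the bidegree components of $d^{2}=0$ is exactly the standard derivation and matches the approach in the cited reference.
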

\section{Harmonic forms on symplectic manifold}
\subsection{$\De_{d'}$-harmonic forms}
In this section, we discuss the harmonic forms that can be constructed
from the two differential operators $d'$ and $d''$. We begin with the
known cohomologies with $d$ (for de Rham $H_{d}$) and $d^{\La}$ (for $H_{d^{\La}}$).

We define the operators $d'=\pa_{J}+\bar{A}_{J}$ and $d''=\bar{\pa}_{J}+A_{J}$. Those have adjoint $d'^{\ast}=\pa^{\ast}_{J}+\bar{A}^{\ast}_{J}$ and  $d''^{\ast}=\bar{\pa}^{\ast}_{J}+A^{\ast}_{J}$. Hence
$$d=d'+d'',\ d^{\ast}=d'^{\ast}+d''^{\ast}$$
and
$$d^{\La}=[d,\La]=\sqrt{-1}d'^{\ast}-\sqrt{-1}d''^{\ast},$$
$$d^{\La_{\ast}}=([d,\La])^{\ast}=[L,d^{\ast}]=-\sqrt{-1}d'+\sqrt{-1}d''.$$
The operators $d'$ and $d''$ were also previously introduced by
de Bartolomeis and Tomassini \cite{BT}. They also established some almost K\"{a}hler identities which obtained by Cirici-Wilson.

With the exterior derivative $d$, there is of course the de Rham cohomology
$$H^{k}_{dR}(X)=\frac{\ker d\cap\Om^{k}(X)}{{\rm{Im}}d\cap\Om^{k}(X)},$$
that is present on all Riemannian manifolds. Since $d^{\La}d^{\La}=0$, there is also a natural cohomology
$$H^{k}_{d^{\La}}(X)=\frac{\ker d^{\La}\cap\Om^{k}(X)}{{\rm{Im}}d^{\La}\cap\Om^{k}(X)}.$$
This cohomology has been discussed in \cite{Mat,TY1,TY2,TY3,Yan}.

We utilize the compatible triple $(X,J,g)$ on $X$ to write the Laplacian associated with the $d^{\La}$-cohomology:
$$\De_{d^{\La}}=[d^{\La},d^{\La_{\ast}}].$$
The self-adjoint Laplacian naturally defines a harmonic form. 

We consider an self-adjoint Laplacian $$\De_{d}+\De_{d^{\La}}=[d,d^{\ast}]+[d^{\La},d^{\La_{\ast}}].$$ 
Suppose that $J$ is integrable, i.e., $N_{J}=0$, then 
$$\De_{d}=\De_{d^{\La}}=2\De_{\pa}=2\De_{\bar{\pa}}.$$
In symplectic case, we introduce two self-adjoint operators $$\De_{d'}:=[d',d'^{\ast}]\ (resp.\  \De_{d''}:=[d'',d''^{\ast}]).$$ 
By the inner product
$$\langle \a,\De_{d^{\bullet}}\a\rangle_{L^{2}(X)}=\|d^{\bullet}\a\|^{2}+\|d^{\bullet\ast}\a\|^{2}$$
we are led to the following definition.
\begin{definition}
A differential form $\a\in\Om^{\ast}(X)$ is call $d^{\bullet}$-harmonic if $\De_{d^{\bullet}}\a=0$, or equivalently, $d^{\bullet}\a=d^{\bullet\ast}\a=0$. We denote the space of $d^{\bullet}$-harmonic $k$-forms by $\mathcal{H}_{d^{\bullet}}^{k}(X)$.
\end{definition}
We will show that
$$\ker\De_{d'}\cap\Om^{k}(X)=\ker\De_{d''}\cap\Om^{k}(X)=\ker(\De_{d}+\De_{d^{\La}})\cap\Om^{k}(X).$$
\begin{proposition}\label{P1}
For any almost K\"{a}hler manifold the following identities hold:\\
(1)\ $\De_{d}=[d,d^{\ast}]=[d',d'^{\ast}]+[d',d''^{\ast}]+[d'',d'^{\ast}]+[d'',d''^{\ast}].$\\
(2)\ $\De_{d^{\La}}=[d^{\La},d^{\La_{\ast}}]=[d',d'^{\ast}]-[d',d''^{\ast}]-[d'',d'^{\ast}]+[d'',d''^{\ast}].$
\end{proposition}
In \cite[Lemma 3.8]{BT}, the authors gave a relation between $\ker\De_{d}$ and $\ker\De_{d^{\La}}$ as follows.	If $\a\in\Om^{k}(X)$ is a $\De_{d}$-harmonic form in a closed almost K\"{a}hler manifold $(X,J,\w)$, then 
\begin{equation*}
\|d^{\La}\a\|^{2}+\|d^{\La_{\ast}}\a\|^{2}=4\textrm{Re}(J^{-1}(A_{J}+\bar{A}_{J})J\a,d^{\La_{\ast}}\a)+4\textrm{Re}(J^{-1}(A^{\ast}_{J}+\bar{A}^{\ast}_{J})J\a,d^{\La}\a).
\end{equation*}
\begin{lemma}\label{L4}(c.f. \cite[Lemma 3.6]{BT})
Let $(X,J,\w)$ be a closed almost K\"{a}hler manifold. Then
$$\De_{d'}=\De_{d''}=\frac{1}{4}(\De_{d}+\De_{d^{\La}}).$$
In particular,
\begin{equation*}
\ker{\De_{d}}\cap\ker{\De_{d^{\La}}}\cap\Om^{k}(X)=\ker d'\cap\ker d'^{\ast}\cap\Om^{k}(X)=\ker d''\cap\ker d''^{\ast}\cap\Om^{k}(X).
\end{equation*}
\end{lemma}
\begin{proof}
Following Proposition \ref{P1}, we get 
$$\De_{d}+\De_{d^{\La}}=2[d',d'^{\ast}]+2[d'',d''^{\ast}]=2\De_{d'}+2\De_{d''}.$$
 Noting that
\begin{equation*}
\begin{split}
[d',d'^{\ast}]&=[\pa_{J},\pa^{\ast}_{J}]+[\pa_{J},\bar{A}^{\ast}_{J}]+[\bar{A}_{J},\pa^{\ast}_{J}]+[\bar{A}_{J},\bar{A}^{\ast}_{J}]\\
&=\De_{\pa_{J}}+\De_{\bar{A}_{J}}+[\pa_{J},\bar{A}^{\ast}_{J}]+[\bar{A}_{J},\pa^{\ast}_{J}]\\
&=\De_{\bar{\pa}_{J}}+\De_{A_{J}}+[A_{J},\bar{\pa}^{\ast}_{J}]+[\bar{\pa}_{J},A^{\ast}_{J}]\\
&=[\bar{\pa}_{J}+A_{J},\bar{\pa}_{J}^{\ast}+A^{\ast}_{J}]\\
&=[d'',d''^{\ast}].\\
\end{split}
\end{equation*}
Here we use the identities $\De_{\pa_{J}}+\De_{\bar{A}_{J}}=\De_{\bar{\pa}_{J}}+\De_{A_{J}}$, $[\pa_{J},\bar{A}^{\ast}_{J}]=[A_{J},\bar{\pa}^{\ast}_{J}]$ and $[\bar{A}_{J},\pa^{\ast}_{J}]=[\bar{\pa}_{J},A^{\ast}_{J}]$ (see Proposition \ref{P3} and \ref{P7}). Therefore, we get 
$\De_{d'}=\De_{d''}=\frac{1}{4}(\De_{d}+\De_{d^{\La}})$.
\end{proof}
\subsection{Hard Lefschetz Condition}
From Lemma \ref{L4}, we know that $\mathcal{H}^{k}_{d'}(X)=\mathcal{H}^{k}_{d''}(X)\subset\mathcal{H}^{k}_{d}(X)$ and $$\dim\mathcal{H}^{k}_{d'}(X)=\dim\mathcal{H}^{k}_{d''}(X)\leq\dim\mathcal{H}^{k}_{d}(X),$$ 
it implies that $\mathcal{H}^{k}_{d^{\bullet}}(X)$ is finite dimensional. We denote the Betti numbers of $X$ by $b_{i}(X):=\dim\mathcal{H}^{i}_{d}(X)$. The Euler number of $X$ is given by
$$\chi(X)=\sum_{i\geq 0}(-1)^{i}\dim\mathcal{H}^{i}_{d}(X)=\sum_{i\geq0}(-1)^{i}b_{i}.$$
We denote the $i$-th almost K\"{a}her Betti numbers by $b^{AK}_{i}:=\dim\mathcal{H}^{i}_{d'}(X)$. Therefore we define the almost K\"{a}her Euler number as follows
$$\chi^{AK}(X)=\sum_{i\geq0}(-1)^{i}\dim\mathcal{H}^{i}_{d'}(X)=\sum_{i\geq0}(-1)^{i}b^{AK}_{i}.$$
\begin{remark}
	Suppose the almost complex structure $J$ on $X$ is integrable, i.e., $(X,J,\w)$ is K\"{a}hlerian. Hence $A_{J}=\bar{A}_{J}=0$. In this case, $\mathcal{H}^{k}_{d'}(X)=\ker\De_{\pa}\cap\Om^{k}$ and  $\mathcal{H}^{k}_{d''}(X)=\ker\De_{\bar{\pa}}\cap\Om^{k}$. By the identity $\De_{\pa}=\De_{\bar{\pa}}=\frac{1}{2}\De_{d}$, one knows that 
	$$\mathcal{H}^{k}_{d'}(X)=\mathcal{H}^{k}_{d''}(X)=\mathcal{H}^{k}_{dR}(X).$$
	Hence the almost K\"{a}her Euler numbers $\chi^{AK}(X)$ is the Euler number of $X$.
\end{remark}
A special class of symplectic manifolds is represented by those ones satisfying the Hard Lefschetz Condition (HLC), i.e., those closed $2n$-dimensional symplectic manifolds $(X,J,\w)$ for which the map
$$L^{k}:H^{n-k}_{dR}(X)\rightarrow H^{n+k}_{dR}(X),\ \forall\ 0\leq k<n$$
are isomorphisms. In particular, a classical result states if $(X,J,\w)$ is a closed K\"{a}hler manifold, then $(X,J,\w)$ satisfies the HLC \cite{Huy}. In \cite{TW}, the authors studied the Hard Lefschetz property of $\ker\De_{d}$ on almost K\"{a}hler manifold. 
\begin{theorem}(\cite[Theorem 5.2]{TW})
Let $(X,J,\w)$ be a closed $2n$-dimensional almost K\"{a}hler manifold. Then $\mathcal{H}^{k}_{d'}(X)$ satisfies the HLC.
\end{theorem}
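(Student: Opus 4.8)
The plan is to put an $\mathfrak{sl}_2$-action on the finite-dimensional graded space $\mathcal{H}^{\bullet}_{\pa_-}(X)=\bigoplus_k\mathcal{H}^k_{\pa_-}(X)$ via the Lefschetz triple $(L,\La,H)$ and then read off the isomorphisms $L^k\colon\mathcal{H}^{n-k}_{\pa_-}(X)\to\mathcal{H}^{n+k}_{\pa_-}(X)$ from the representation theory of $\mathfrak{sl}_2$. The relations $[H,L]=2L$, $[H,\La]=-2\La$, $[L,\La]=H$ hold on all of $\Om^{\bullet}(X)$ purely from the linear algebra of $\w$, so the only thing needed to produce the representation is that $L$, $\La$ and $H$ each map $\mathcal{H}^{\bullet}_{\pa_-}(X)$ to itself. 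Since $H$ acts as a scalar on each $\Om^k$ this is automatic for $H$, and since $\La=L^{\ast}$ with $\De_{\pa_-}$ self-adjoint, it suffices to prove the single commutation relation
$$[L,\De_{\pa_-}]=0;$$
taking adjoints then gives $[\La,\De_{\pa_-}]=0$, whence $L(\mathcal{H}^k_{\pa_-})\subseteq\mathcal{H}^{k+2}_{\pa_-}$ and $\La(\mathcal{H}^k_{\pa_-})\subseteq\mathcal{H}^{k-2}_{\pa_-}$.

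To establish $[L,\De_{\pa_-}]=0$ I would work directly with $\De_{\pa_-}=[\pa_-,\pa^{\ast}_-]=\pa_-\pa^{\ast}_-+\pa^{\ast}_-\pa_-$. By Proposition \ref{P2}(1),(2) both $\pa_J$ and $\bar{A}_J$ commute with $L$, so $[L,\pa_-]=0$. For the adjoint, Proposition \ref{P2}(3),(4) give $[L,\pa^{\ast}_J]=\sqrt{-1}\bar{\pa}_J$ and $[L,\bar{A}^{\ast}_J]=\sqrt{-1}A_J$, hence
$$[L,\pa^{\ast}_-]=\sqrt{-1}(\bar{\pa}_J+A_J)=\sqrt{-1}\,\pa_+.$$
Because $L$ has even degree, $[L,-]$ satisfies the ordinary Leibniz rule, and using $[L,\pa_-]=0$ the two displays combine to
$$[L,\De_{\pa_-}]=\pa_-[L,\pa^{\ast}_-]+[L,\pa^{\ast}_-]\pa_-=\sqrt{-1}(\pa_-\pa_++\pa_+\pa_-)=\sqrt{-1}\,[\pa_-,\pa_+],$$
the last bracket being the graded anticommutator of the two odd operators $\pa_-$ and $\pa_+$.

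It remains to show $[\pa_-,\pa_+]=0$, which I would extract from $d^2=0$ by bookkeeping with the bidegree decomposition. From $d=\pa_-+\pa_+$ we get $d^2=\pa_-^2+[\pa_-,\pa_+]+\pa_+^2=0$. Now $\pa_-^2=\pa_J^2+\{\pa_J,\bar{A}_J\}+\bar{A}_J^2$ has components only in bidegrees $(2,0),(0,2),(-2,4)$, and $\pa_+^2=\bar{\pa}_J^2+\{\bar{\pa}_J,A_J\}+A_J^2$ only in bidegrees $(2,0),(0,2),(4,-2)$, whereas $[\pa_-,\pa_+]$ lives exactly in the complementary bidegrees $(3,-1),(1,1),(-1,3)$. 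Hence the three components of $d^2=0$ in these bidegrees isolate $[\pa_-,\pa_+]$ completely, and they read precisely $\{\pa_J,A_J\}=0$, $\{\pa_J,\bar{\pa}_J\}+\{A_J,\bar{A}_J\}=0$ and $\{\bar{A}_J,\bar{\pa}_J\}=0$, which are three of the relations listed in the expansion of $d^2=0$ in Section 2. Therefore $[\pa_-,\pa_+]=0$ and $[L,\De_{\pa_-}]=0$.

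Finally, with $(L,\La,H)$ acting on the finite-dimensional graded space $\mathcal{H}^{\bullet}_{\pa_-}(X)$, the conclusion is the standard $\mathfrak{sl}_2$ argument: decomposing $\mathcal{H}^{\bullet}_{\pa_-}(X)$ into irreducible $\mathfrak{sl}_2$-modules, each of which is a string of weights symmetric about the middle degree $n$, one sees that $L^k$ carries the degree $n-k$ part isomorphically onto the degree $n+k$ part, so $L^k\colon\mathcal{H}^{n-k}_{\pa_-}(X)\to\mathcal{H}^{n+k}_{\pa_-}(X)$ is an isomorphism for every $0\le k\le n$, i.e.\ the HLC holds. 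The one genuinely delicate point is the identity $[L,\De_{\pa_-}]=0$: in the non-integrable setting neither $\De_d$ nor $\De_{d^{\La}}$ commutes with $L$, and the cancellation appears only for the symmetric combination $\De_{\pa_-}=\tfrac14(\De_d+\De_{d^{\La}})$, which is exactly what the vanishing of the anticommutator $[\pa_-,\pa_+]$ encodes.
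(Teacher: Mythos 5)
Your proof is correct, and while it ends at the same place as the paper's --- the commutation relation $[L,\De_{\pa_-}]=0$ followed by the standard $\mathfrak{sl}_2$ argument on the finite-dimensional space $\mathcal{H}^{\bullet}_{\pa_-}(X)$ --- you verify that key relation by a genuinely different computation. The paper works entirely at the level of $d$, $d^{\ast}$, $d^{\La}$, $d^{\La_{\ast}}$: it applies the Jacobi identity to the triples $(L,d,d^{\ast})$ and $(L,d^{\La_{\ast}},d^{\La})$ and uses the symplectic relations $[L,d]=[L,d^{\La_{\ast}}]=0$, $[d^{\ast},L]=-d^{\La_{\ast}}$, $[d^{\La},L]=d$, so that the two error terms $[L,\De_d]=[d,d^{\La_{\ast}}]$ and $[L,\De_{d^{\La}}]=-[d,d^{\La_{\ast}}]$ cancel in the sum. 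You instead descend to the bigraded pieces $\pa_J,\bar\pa_J,A_J,\bar A_J$, use the Cirici--Wilson identities of Proposition \ref{P2} to get $[L,\pa_-]=0$ and $[L,\pa^{\ast}_-]=\sqrt{-1}\,\pa_+$, and reduce everything to the vanishing of the graded anticommutator $[\pa_-,\pa_+]$, which you correctly extract from the bidegree $(3,-1)$, $(1,1)$, $(-1,3)$ components of $d^2=0$ (these are exactly three of the seven relations listed in Section 2, and they are disjoint in bidegree from the components of $\pa_-^2+\pa_+^2$, so the isolation argument is sound). The paper's route is shorter and stays within the symplectic $d^{\La}$-calculus; yours is more granular and makes visible precisely where the non-integrability could have obstructed the commutation --- namely in $[L,\De_{\pa_-}]=\sqrt{-1}[\pa_-,\pa_+]$ --- and why it does not, which also cleanly explains the paper's subsequent remark that $[L,\De_d]=[d,d^{\La_{\ast}}]$ alone fails to vanish unless $J$ is integrable.
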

\begin{proof}
Notice that $4\De_{d'}=\De_{d}+\De_{d^{\La}}$. We only to need to prove that $\ker\De_{d}\cap\ker\De_{d^{\La}}$ satisfies HLC. Let us start by showing that
$$[\De_{d}+\De_{d^{\La}},L]=0.$$	
In fact, by the Jacobi identity, we have
$$[L,[d,d^{\ast}]]+[d,[d^{\ast},L]]-[d^{\ast},[L,d]]=0,$$
and
$$[L,[d^{\La_{\ast}},d^{\La}]]+[d^{\La_{\ast}},[d^{\La},L]]-[d^{\La},[L,d^{\La_{\ast}}]]=0.$$
Since $[L,d]=[L,d^{\La_{\ast}}]=0$, $[d^{\ast},L]=-d^{\La_{\ast}}$, $[d^{\La},L]=d$, and
$$[d,d^{\La_{\ast}}]=[d^{\La_{\ast}},d]=d^{\La_{\ast}}d+dd^{\La_{\ast}},$$
we have
$$[L,[d,d^{\ast}]]+[L,[d^{\La_{\ast}},d^{\La}]]=0.$$
which gives $[\De_{d}+\De_{d^{\La}},L]=0$. Thus $L$ maps $\ker{\De_{d}}\cap\ker{\De_{d^{\La}}}=\ker{\De_{d}+\De_{d^{\La}}}$ to itself. A similar argument gives $[\De_{d}+\De_{d^{\La}},\La]=0$, it implies that  $\La(\ker{\De_{d}}\cap\ker{\De_{d^{\La}}})\subset \ker{\De_{d}}\cap\ker{\De_{d^{\La}}}$. Thus there is an $sl_{2}$-structure on $\ker{\De_{d}}\cap\ker{\De_{d^{\La}}}$ and our theorem follows.
\end{proof}
\begin{remark}
The above proof gives $[L,\De_{d}]=[d,d^{\La_{\ast}}]$. Since $[d,d^{\La_{\ast}}]=0$ if and only if $J$ is integrable. We then know that $X$ is K\"{a}hlerian if and only if $[L,\De_{d}]=0$.
\end{remark}
\begin{corollary}(c.f. \cite[Theorem 5.3]{TW})\label{C1}
Let $(X,J,\w)$ be a closed  $2n$-dimensional almost K\"{a}hler manifold. If
$\mathcal{H}^{k}_{d}(X)=\mathcal{H}^{k}_{d^{\La}}(X)$, for all $0\leq k<n$, then HLC on $(\mathcal{H}^{\bullet}_{d},L)$ and HLC on $(\mathcal{H}^{\bullet}_{d^{\La}},L)$.
\end{corollary}
It's easy to see that the HLC on $(\mathcal{H}^{\bullet}_{d},L)$ implies the HLC on $(H^{\bullet}_{dR},L)$. But in general, we don't know whether they are equivalent or not.  In \cite{TWZ}, the authors studied the symplectic cohomologies and symplectic harmonic forms which introduced by Tseng-Yau. Based on this, they get if $(X,J,\w)$ is a closed symplectic parabolic manifold which satisfies the hard Lefschetz property, then its Euler number satisfies the inequality $(-1)^{n}\chi(X)\geq0$. We then have
\begin{corollary}(c.f. \cite{TWZ})
Let $(X,J,\w)$ be a closed  $2n$-dimensional symplectic parabolic manifold. Suppose that $\dim\mathcal{H}^{k}_{d}(X)=\dim\mathcal{H}^{k}_{d^{\La}}(X)$ for all $0\leq k<n$. Then the Euler number of $X$ satisfies
$$(-1)^{n}\chi(X)\geq0.$$
\end{corollary}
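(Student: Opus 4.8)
The plan is to reduce the statement to the theorem of Tan--Wang--Zhou \cite{TWZ}, which guarantees $(-1)^{n}\chi(X)\geq 0$ for a closed symplectic parabolic manifold satisfying the hard Lefschetz property on de Rham cohomology. Since parabolicity is assumed, the only work is to convert the hypothesis on $\pa_{-}$-harmonic forms into that hard Lefschetz property.

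First I would invoke Proposition \ref{P8}. The assumption $\dim\mathcal{H}^{k}_{\pa_{-}}(X)=\dim\mathcal{H}^{k}_{dR}(X)$ for all $0\leq k\leq 2n$ is exactly condition (1) there, which is equivalent to condition (3), namely HLC on the pair $(\ker\De_{d},L)$. (Recall from Lemma \ref{L4} that $4\De_{\pa_{-}}=\De_{d}+\De_{d^{\La}}$, so the $\pa_{-}$-harmonic forms are precisely $\ker\De_{d}\cap\ker\De_{d^{\La}}$, and the underlying $sl_{2}$-action yielding the HLC was already produced in the proof of the preceding theorem.)

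Second I would upgrade the HLC from harmonic forms to cohomology. Since $\w$ is closed, $L=\w\wedge(\cdot)$ commutes with $d$ and descends to $H^{\ast}_{dR}(X)$; moreover HLC on $(\ker\De_{d},L)$ means $L^{k}$ carries $\ker\De_{d}\cap\Om^{n-k}$ isomorphically onto $\ker\De_{d}\cap\Om^{n+k}$. Because, on a closed manifold, Hodge theory identifies each de Rham class with its unique $\De_{d}$-harmonic representative, the commuting square relating $L^{k}$ on harmonic forms with $L^{k}$ on cohomology shows that $L^{k}:H^{n-k}_{dR}(X)\to H^{n+k}_{dR}(X)$ is an isomorphism for every $k$. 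This is the hard Lefschetz property required by \cite{TWZ}, and it is the content of the remark that HLC on $(\ker\De_{d},L)$ implies HLC on $(H^{\ast}_{d},L)$.

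Finally, as $X$ is symplectic parabolic and now satisfies the hard Lefschetz property, the Tan--Wang--Zhou theorem yields $(-1)^{n}\chi(X)\geq 0$. The substantive analytic input --- the vanishing of $L^{2}$ symplectic-harmonic forms off the middle degree on the universal cover, established through Tseng--Yau's symplectic cohomology under parabolicity --- is entirely contained in \cite{TWZ}; the present contribution is the translation of the $\pa_{-}$-harmonic hypothesis into that framework, so I expect no serious obstacle beyond carefully matching the HLC conventions between harmonic forms and cohomology.
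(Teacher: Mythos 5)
Your argument is exactly the one the paper intends: the corollary is stated without proof immediately after Proposition \ref{P8} and the remark that HLC on $(\ker\De_{d},L)$ implies HLC on $(H^{\ast}_{d},L)$, followed by the citation of the Tan--Wang--Zhou theorem for parabolic manifolds with the hard Lefschetz property. Your three steps (Proposition \ref{P8} to get HLC on harmonic forms, Hodge theory to pass to cohomology, then \cite{TWZ}) reproduce that chain correctly, so there is nothing to add.
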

\subsection{Index of a family elliptic operators}
Let $X$ be a closed Riemannian manifold of real dimension $\dim X$. Then for each $0\leq k\leq\dim X$, we have the following de Rham elliptic operator $\mathcal{D}_{dR}=d+d^{\ast}$:
$$\mathcal{D}_{dR}:\bigoplus_{k=even}\Om^{k}(X)\rightarrow\bigoplus_{k=odd}\Om^{k}(X),$$
whose index is the Euler number of $X$.  In fact,
\begin{equation*}
\begin{split}
Index(\mathcal{D}_{dR})&=\dim\ker{\mathcal{D}_{dR}}-\dim({\rm{coker}}{\mathcal{D}_{dR}})\\
&=\dim\bigoplus_{k=even}\mathcal{H}^{k}_{d}(X)-\dim\bigoplus_{k=odd}\mathcal{H}^{k}_{d}(X)\\
&=\sum_{k=0}^{\dim X}(-1)^{k}b_{k}\\
&=\chi(X).\\
\end{split}
\end{equation*}
For the almost K\"{a}hler manifold $(X,J,\w)$, we construct a family elliptic operator 
\begin{equation}\label{E8}
\mathcal{D}(t)=\sqrt{\frac{2}{1+t^{2}}}\big{(}(d'+td'')+(d'^{\ast}+td''^{\ast})\big{)}
:\bigoplus_{k=even}\Om^{k}(X)\rightarrow\bigoplus_{k=odd}\Om^{k}(X).
\end{equation}
Hence $\mathcal{D}(0)=d'+d'^{\ast}$ and $\mathcal{D}(1)=\mathcal{D}_{dR}$.
\begin{proposition}\label{P4}(\cite[Proposition 3.3]{TY1})
For any $t\in\mathbb{R}^{\geq0}$, 
$$\mathcal{D}^{2}(t):\bigoplus_{k=even/odd}\Om^{k}(X)\rightarrow\bigoplus_{k=even/odd}\Om^{k}(X)$$
 is an elliptic differential operator.
\end{proposition}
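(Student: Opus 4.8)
The plan is to prove ellipticity by showing that the principal symbol of the second-order operator $\mathcal{D}^{2}(t)$ is invertible away from the zero section. Since the operators $A_{J}$, $\bar{A}_{J}$ and their adjoints are $0$-order, they contribute nothing to principal symbols; hence the symbols of $\pa_{\pm}$ and $\pa^{\ast}_{\pm}$ coincide with those of $\pa_{J},\bar{\pa}_{J},\pa_{J}^{\ast},\bar{\pa}_{J}^{\ast}$. In particular $\mathcal{D}(t)$ is a genuine first-order operator and $\mathcal{D}^{2}(t)$ is second-order, so it suffices to analyze $\sigma(\mathcal{D}(t))(\xi)^{2}$ for an arbitrary nonzero real covector $\xi\in T^{\ast}_{x}X$.

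First I would compute the symbol of $\mathcal{D}(t)$. Fix $\xi\neq0$ and decompose it into its $(1,0)$ and $(0,1)$ parts, $\xi=\xi'+\xi''$ with $\xi''=\overline{\xi'}$. Writing $e'$, $e''$ for exterior multiplication by $\xi'$, $\xi''$ and $\iota'$, $\iota''$ for their metric adjoints, the standard symbol computation for $\pa_{J}$, $\bar{\pa}_{J}$ gives
$$\sigma(\pa_-)(\xi)=\sqrt{-1}\,e',\quad \sigma(\pa_+)(\xi)=\sqrt{-1}\,e'',\quad \sigma(\pa^{\ast}_-)(\xi)=-\sqrt{-1}\,\iota',\quad \sigma(\pa^{\ast}_+)(\xi)=-\sqrt{-1}\,\iota''.$$
Consequently, with $c=\sqrt{2/(1+t^{2})}$,
$$\sigma(\mathcal{D}(t))(\xi)=c\sqrt{-1}\big[(e'-\iota')+t(e''-\iota'')\big].$$

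Then I would square this symbol using the Hermitian Clifford relations. Setting $a=e'-\iota'$ and $b=e''-\iota''$, one has $e'^{2}=e''^{2}=\iota'^{2}=\iota''^{2}=0$; the mixed anticommutators $\{e',\iota''\}$ and $\{\iota',e''\}$ vanish because $(1,0)$- and $(0,1)$-covectors are orthogonal for the Hermitian inner product induced by $g(\cdot,\cdot)=\w(\cdot,J\cdot)$; and $\{e',\iota'\}=|\xi'|^{2}$, $\{e'',\iota''\}=|\xi''|^{2}$ with $|\xi'|^{2}=|\xi''|^{2}=\tfrac12|\xi|^{2}$. Hence $a^{2}=-|\xi'|^{2}$, $b^{2}=-|\xi''|^{2}$ and $\{a,b\}=0$, so
$$\sigma(\mathcal{D}^{2}(t))(\xi)=-c^{2}(a+tb)^{2}=-c^{2}\big(-|\xi'|^{2}-t^{2}|\xi''|^{2}\big)=\frac{2}{1+t^{2}}\cdot\frac{1+t^{2}}{2}|\xi|^{2}=|\xi|^{2}.$$
Thus $\sigma(\mathcal{D}^{2}(t))(\xi)=|\xi|^{2}\,\mathrm{Id}$, which is invertible for every $\xi\neq0$, proving that $\mathcal{D}^{2}(t)$ is elliptic. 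I would also remark that the normalizing factor $\sqrt{2/(1+t^{2})}$ is chosen precisely so that $\mathcal{D}^{2}(t)$ has the same principal symbol as the Hodge Laplacian $\De_{d}$, uniformly in $t$.

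The main obstacle is bookkeeping rather than conceptual: one must pin down the sign and conjugation conventions in the Hermitian Clifford relations, and verify carefully that the zeroth-order pieces $A_{J},\bar{A}_{J},A^{\ast}_{J},\bar{A}^{\ast}_{J}$ genuinely drop out of the principal symbol, so that the non-integrable almost-K\"{a}hler case is handled on exactly the same footing as the K\"{a}hler case. Once the vanishing of the mixed anticommutators is established from the compatibility of $g$, $J$ and $\w$, the computation closes uniformly for all $t\geq0$.
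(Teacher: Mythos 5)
Your proposal is correct and follows essentially the same route as the paper: both proofs observe that $A_{J},\bar{A}_{J}$ and their adjoints are zeroth-order and hence invisible to the principal symbol, reduce $\mathcal{D}(t)$ at the symbol level to $\sqrt{2/(1+t^{2})}\,(\pa_{J}+t\bar{\pa}_{J}+\pa_{J}^{\ast}+t\bar{\pa}_{J}^{\ast})$, and conclude that $\sigma(\mathcal{D}^{2}(t))=\sigma(\De_{d})=|\xi|^{2}\,\mathrm{Id}$. The only difference is presentational: you carry out the squaring explicitly via the Hermitian Clifford relations, while the paper invokes the K\"{a}hler identities in the flat model $\C^{n}$ to identify the symbols of $[\pa_{J},\pa_{J}^{\ast}]$ and $[\bar{\pa}_{J},\bar{\pa}_{J}^{\ast}]$; your version is somewhat more self-contained.
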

\begin{proof}
	To calculate the symbol of $\mathcal{D}^{2}(t)$, we will work a local unitary frame of $T^{\ast}X$ and choose a basis $\{e^{1},\cdots,e^{n}\}$ such that the metric is written as
	$$g=e^{i}\otimes\bar{e}^{i}+\bar{e}^{i}\otimes e^{i},$$
	with $i=1,\cdots,n$. With an almost
	complex structure $J$, any $k$-form can be decomposed into a sum of $(p,q)$-forms with $p+q=k$. We can write a $(p,q)$-form in the local moving-frame coordinates
	$$A_{p,q}=A_{i_{1}\cdots i_{p}j_{1}\cdots j_{q}}e^{i_{1}}\wedge\cdots e^{i_{p}}\wedge \bar{e}^{j_{1}}\wedge\cdots\bar{e}^{j_{p}}.$$
	The exterior derivative then acts as
	\begin{equation}\label{E7}
	dA_{p,q}=(\pa A_{p,q})_{p+1,q}+(\bar{\pa}A_{p,q})_{p,q+1}+A_{i_{1}\cdots i_{p}j_{1}\cdots j_{q}}d(e^{i_{1}}\wedge\cdots e^{i_{p}}\wedge \bar{e}^{j_{1}}\wedge\cdots\bar{e}^{j_{p}}),
	\end{equation}
	where
	$$(\pa A_{p,q})_{p+1,q}=\pa_{i_{p+1}}A_{i_{1}\cdots i_{p}j_{1}\cdots j_{q}}e^{i_{1}}\wedge\cdots e^{i_{p}}\wedge \bar{e}^{j_{1}}\wedge\cdots\bar{e}^{j_{p}}$$
	$$(\bar{\pa}A_{p,q})_{p,q+1}=\bar{\pa}_{j_{q+1}}A_{i_{1}\cdots i_{p}j_{1}\cdots j_{q}}e^{i_{1}}\wedge\cdots e^{i_{p}}\wedge \bar{e}^{j_{1}}\wedge\cdots\bar{e}^{j_{p}}.$$
	In calculating the symbol, we are only interested in the highest-order
	differential acting on $A_{i_{1}\cdots i_{p}j_{1}\cdots j_{q}}$. Therefore, only the first two terms of (\ref{E7}) are relevant for the calculation. In dropping the last term, we are effectively working in $\mathbb{C}^{n}$ and can make use of all the K\"{a}hler identities involving derivative operators. So, effectively, we have (using $\simeq$ to denote equivalence under symbol calculation)
	$$d'+td''\simeq \pa_{J}+t\bar{\pa}_{J},$$
	$$d'^{\ast}+td''^{\ast}\simeq\pa^{\ast}_{J}+t\bar{\pa}^{\ast}_{J}.$$
	Noting that the highest-order of the operators $[\pa_{J},\pa^{\ast}_{J}]$,  $[\bar{\pa}_{J},\bar{\pa}^{\ast}_{J}]$ differential acting on $A_{i_{1}\cdots i_{p}j_{1}\cdots j_{q}}$ are the same, since $\De_{\pa_{J}}+\De_{\bar{A}_{J}}=\De_{\bar{\pa}}+\De_{A_{J}}$. We thus have
	\begin{equation*}
	\begin{split}
	\mathcal{D}^{2}(t)&\simeq\frac{2}{1+t^{2}}\big{(} [\pa_{J},\pa^{\ast}_{J}]+t^{2}[\bar{\pa}_{J},\bar{\pa}^{\ast}_{J}]\big{)}\\
	&\simeq 2[\pa_{J},\pa^{\ast}_{J}]\\
	&\simeq ([\pa_{J},\pa^{\ast}_{J}]+[\bar{\pa}_{J},\bar{\pa}^{\ast}_{J}])\\
	&\simeq \De_{d}.
	\end{split}
	\end{equation*}
\end{proof}
\begin{corollary}\label{C2}
	Let $(X,J,\w)$ be a closed $2n$-dimensional almost K\"{a}hler manifold. Then 
	$$\chi(X)=Index(\mathcal{D}(0)).$$
\end{corollary}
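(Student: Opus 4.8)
The plan is to realize $\mathcal{D}(0)$ and $\mathcal{D}_{dR}=\mathcal{D}(1)$ as the two endpoints of the single continuous family $\{\mathcal{D}(t)\}_{t\in[0,1]}$ of elliptic operators and then invoke the homotopy invariance of the Fredholm index. Since $d=\pa_-+\pa_+$ and $d^{\ast}=\pa^{\ast}_-+\pa^{\ast}_+$, one checks directly from (\ref{E8}) that $\mathcal{D}(1)=(\pa_-+\pa_+)+(\pa^{\ast}_-+\pa^{\ast}_+)=d+d^{\ast}=\mathcal{D}_{dR}$ and $\mathcal{D}(0)=\sqrt{2}(\pa_-+\pa^{\ast}_-)$. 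As recalled before (\ref{E8}), the de Rham operator $\mathcal{D}_{dR}$ has index $\chi(X)$ via Hodge theory, so it suffices to show that $Index(\mathcal{D}(t))$ is independent of $t$.

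First I would confirm that every $\mathcal{D}(t)$ is a genuine elliptic operator on the closed manifold $X$, so that its index is well defined. This is exactly the content of Proposition \ref{P4}: the principal symbol of $\mathcal{D}^{2}(t)$ coincides with that of $\De_d$, namely $|\xi|^{2}\cdot\mathrm{Id}$, for every $t\geq0$. Consequently the leading symbol of the first-order operator $\mathcal{D}(t):\bigoplus_{k\ even}\Om^{k}(X)\to\bigoplus_{k\ odd}\Om^{k}(X)$ is invertible for all $\xi\neq0$, so $\mathcal{D}(t)$ is elliptic and hence Fredholm between the relevant Sobolev completions. The zeroth-order terms $A_{J},\bar{A}_{J}$, which carry the non-integrability of $J$, do not affect the symbol, which is why ellipticity persists for the whole family.

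Next I would note that $t\mapsto\mathcal{D}(t)$ is a norm-continuous family of Fredholm operators: the coefficients depend smoothly on $t$, and the difference $\mathcal{D}(t)-\mathcal{D}(t')$ is controlled by a bounded zeroth-order part together with a leading part whose symbol varies continuously while remaining invertible. The index is an integer-valued, locally constant function on the space of such Fredholm operators, hence it is constant on the connected interval $[0,1]$. Equivalently, one may invoke the Atiyah--Singer index theorem directly: the analytic index depends only on the homotopy class of the invertible principal symbol, and all the $\sigma(\mathcal{D}(t))$ lie in one homotopy class through the family. The positive scalar factor $\sqrt{2/(1+t^{2})}$ plays no role, since multiplication by a positive function changes neither the kernel nor the cokernel.

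Combining these steps gives $Index(\mathcal{D}(0))=Index(\mathcal{D}(1))=Index(\mathcal{D}_{dR})=\chi(X)$, as claimed. The only point requiring care is the continuity of the family in the Fredholm (operator-norm) topology needed to apply homotopy invariance; but because the $t$-dependence enters only through continuously varying coefficients with principal symbol kept invertible by Proposition \ref{P4}, this is routine, and the substantive input is simply the ellipticity supplied by that proposition together with the endpoint identification $\mathcal{D}(1)=\mathcal{D}_{dR}$.
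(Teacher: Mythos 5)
Your proposal is correct and follows essentially the same route as the paper: both identify $\mathcal{D}(1)=\mathcal{D}_{dR}$, use Proposition \ref{P4} to guarantee ellipticity of the whole family $\{\mathcal{D}(t)\}_{t\in[0,1]}$, and conclude by homotopy invariance (local constancy) of the Fredholm index along the continuous deformation. The paper phrases the last step as an appeal to the index-invariance theorem for Dirac-type operators in Nicolaescu's notes, but the substance is identical to your argument.
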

\begin{proof}
	The operator $\mathcal{D}(t)$ is self-adjoint. Following Proposition \ref{P4}, we know that $\mathcal{D}^{2}(t)$ is a generalized Laplacian. Hence $\mathcal{D}(t)$ is a Dirac type operator in the sense of \cite[Definition 2.1.17]{Nic}. Naturally, the operator $\mathcal{D}(t)$ is elliptic. By Theorem  \cite[Theorem 2.1.32]{Nic}, for any $t\in[0,1]$, we have
	$$Index(\mathcal{D}(t))=Index(\mathcal{D}(1)).$$
	Noticing that $Index(\mathcal{D}(1))=\chi(X)$. Thus we have $\chi(X)=Index(\mathcal{D}(0))$.
\end{proof}
We recall the Atiyah's $L^{2}$ index \cite{Ati,Pan}.
\begin{theorem}\cite[Theorem 6.1]{Pan}
	Let $X$ be a closed Riemannian manifold, $P$ a determined elliptic operator on sections of certain bundles over $X$. Denote  by $\tilde{\mathcal{D}}$ its lift of $\mathcal{D}$ to the universal convering space $\tilde{X}$. Let $\Ga=\pi_{1}(M)$. Then   the $L^{2}$ kernel of $\tilde{P}$ has a finite $\Ga$-dimension and 
	$$L^{2}Index_{\Ga}(\tilde{P})=Index(P).$$
\end{theorem}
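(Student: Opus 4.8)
The plan is to establish the identity by the heat-kernel method of Atiyah. Write $P^{\ast}P$ and $PP^{\ast}$ for the two Laplacians associated to the determined elliptic operator $P$, and let $\tilde{P}$, $\tilde{P}^{\ast}\tilde{P}$, $\tilde{P}\tilde{P}^{\ast}$ denote their lifts to $\tilde{X}$. Since $\Gamma=\pi_{1}(X)$ acts freely, properly discontinuously and by isometries on $\tilde{X}$ with compact quotient $X$, the lifted heat operators $e^{-t\tilde{P}^{\ast}\tilde{P}}$ and $e^{-t\tilde{P}\tilde{P}^{\ast}}$ have smooth $\Gamma$-equivariant Schwartz kernels $\tilde{k}^{\pm}_{t}(x,y)$, i.e. $\tilde{k}^{\pm}_{t}(\gamma x,\gamma y)=\tilde{k}^{\pm}_{t}(x,y)$. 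First I would fix a fundamental domain $\mathcal{F}\subset\tilde{X}$ for the $\Gamma$-action and define the von Neumann $\Gamma$-trace of a $\Gamma$-invariant operator $A$ with smooth kernel $k_{A}$ by $\mathrm{Tr}_{\Gamma}(A)=\int_{\mathcal{F}}\mathrm{tr}\,k_{A}(x,x)\,dx$; the $L^{2}$ index is then the super-trace
$$L^{2}\mathrm{Index}_{\Gamma}(\tilde{P})=\mathrm{Tr}_{\Gamma}\big(e^{-t\tilde{P}^{\ast}\tilde{P}}\big)-\mathrm{Tr}_{\Gamma}\big(e^{-t\tilde{P}\tilde{P}^{\ast}}\big).$$

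The next step is to prove a $\Gamma$-equivariant McKean--Singer formula: this super-trace is independent of $t$, and as $t\to\infty$ it converges to $\dim_{\Gamma}\ker_{L^{2}}\tilde{P}-\dim_{\Gamma}\ker_{L^{2}}\tilde{P}^{\ast}$. The $t$-independence follows exactly as in the compact case from differentiating in $t$ and using the polar-decomposition partial isometry, which is $\Gamma$-equivariant and hence preserves $\mathrm{Tr}_{\Gamma}$, so that the nonzero spectral contributions of $\tilde{P}^{\ast}\tilde{P}$ and $\tilde{P}\tilde{P}^{\ast}$ cancel. The $t\to\infty$ limit, together with finiteness of $\dim_{\Gamma}$, uses that $\mathrm{Tr}_{\Gamma}$ is a normal, faithful, positive trace and that $e^{-t\tilde{\De}}$ (with $\tilde{\De}$ either Laplacian) converges strongly to the orthogonal projection onto its $L^{2}$ kernel; since $\mathrm{Tr}_{\Gamma}(e^{-t\tilde{\De}})<\infty$ for every $t>0$, this projection has finite $\Gamma$-trace, which is exactly the statement that the $L^{2}$ kernel has finite $\Gamma$-dimension.

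To identify this with $\mathrm{Index}(P)$ I would compare upstairs and downstairs through the standard summation formula for the downstairs heat kernel, $k^{\pm}_{t}(\pi x,\pi y)=\sum_{\gamma\in\Gamma}\tilde{k}^{\pm}_{t}(x,\gamma y)$, which holds because the right-hand side is $\Gamma$-invariant, solves the heat equation, and has the correct initial condition. Evaluating on the diagonal and integrating over $\mathcal{F}$ gives
$$\mathrm{Index}(P)=\int_{\mathcal{F}}\sum_{\gamma\in\Gamma}\big(\mathrm{tr}\,\tilde{k}^{+}_{t}(x,\gamma x)-\mathrm{tr}\,\tilde{k}^{-}_{t}(x,\gamma x)\big)\,dx,$$
where the $\gamma=e$ term is exactly $L^{2}\mathrm{Index}_{\Gamma}(\tilde{P})$. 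Since $\Gamma$ acts freely with compact quotient, the minimal displacement $\inf_{x\in\tilde{X}}\rho_{\tilde{g}}(x,\gamma x)$ is strictly positive for each $\gamma\ne e$, so the Gaussian off-diagonal bounds on $\tilde{k}^{\pm}_{t}$ force every $\gamma\ne e$ term to decay like $e^{-c/t}$ as $t\to0$. Letting $t\to0$ kills all off-diagonal terms, and the $t$-independence established above lets me evaluate the surviving $\gamma=e$ term at any fixed $t$; this yields $\mathrm{Index}(P)=L^{2}\mathrm{Index}_{\Gamma}(\tilde{P})$.

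The main obstacle is the von Neumann--algebraic bookkeeping rather than any single estimate: one must set up $\mathrm{Tr}_{\Gamma}$ so that it is genuinely a trace, independent of the choice of $\mathcal{F}$, verify that the heat operators lie in the relevant trace ideal, and justify the strong-convergence and normality argument giving both the $t\to\infty$ limit and the finiteness of $\dim_{\Gamma}$. The analytic inputs, namely smoothness and uniform Gaussian decay of $\tilde{k}^{\pm}_{t}$, are guaranteed by the bounded geometry of $\tilde{X}$ inherited from the closed manifold $X$, so once that framework is in place the decay of the off-diagonal terms, and hence the final comparison, is routine.
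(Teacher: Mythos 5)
The paper does not prove this statement itself --- it is quoted directly from the cited reference (Atiyah's $L^{2}$-index theorem, as presented in Pansu's notes) --- and your heat-kernel sketch (fundamental-domain $\Gamma$-trace, $\Gamma$-equivariant McKean--Singer formula, the summation identity $k_{t}(\pi x,\pi y)=\sum_{\gamma\in\Gamma}\tilde{k}_{t}(x,\gamma y)$, and Gaussian off-diagonal decay killing the $\gamma\neq e$ terms as $t\to0$) is precisely the standard argument given in those sources. The outline is correct, and the points you flag as needing care (normality of $\mathrm{Tr}_{\Gamma}$, trace-class membership of the heat operators, uniformity of the off-diagonal sum from bounded geometry) are indeed the only substantive issues to be checked.
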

We denote by $\tilde{\mathcal{D}}(t)$ the lifted elliptic operator of $\mathcal{D}(t)$. We then have
\begin{corollary}\label{C4}
	Let $(X,J,\w)$ be a closed $2n$-dimensional almost K\"{a}hler manifold, $\pi:(\tilde{X},\tilde{J},\tilde{\w})\rightarrow (X,J,\w)$ the universal covering maps for $X$. Let $\Ga=\pi_{1}(X)$. Then the Euler number of $X$ satisfies 
	$$\chi(X)=Index(\mathcal{D}(0))= L^{2}Index_{\Ga}(\tilde{\mathcal{D}}(0))$$
\end{corollary}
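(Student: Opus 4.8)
The plan is to combine the two ingredients already in place and reduce the statement to an application of Atiyah's $L^{2}$-index theorem. The left-hand equality $\chi(X)=Index(\mathcal{D}(0))$ is precisely Corollary \ref{C2}, so the only real content is identifying $Index(\mathcal{D}(0))$ with the $\Ga$-equivariant $L^{2}$-index of the lift $\tilde{\mathcal{D}}(0)$. Thus I would open by quoting Corollary \ref{C2} to dispose of the first equality, and then concentrate on checking that $\mathcal{D}(0)$ meets the hypotheses of the theorem of \cite{Pan}.

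To apply that theorem I must verify that $\mathcal{D}(0)=\sqrt{2}(\pa_{-}+\pa^{\ast}_{-})$ is a determined elliptic operator. This is already contained in the analysis preceding Corollary \ref{C2}: by Proposition \ref{P4} the square $\mathcal{D}^{2}(0)$ has the same principal symbol as $\De_{d}$, hence is a generalized Laplacian, so $\mathcal{D}(0)$ is a self-adjoint Dirac-type operator $\bigoplus_{k\ even}\Om^{k}(X)\rightarrow\bigoplus_{k\ odd}\Om^{k}(X)$ and is therefore elliptic. The operator is moreover determined, since on a $2n$-manifold the source and target bundles have equal rank, $\sum_{k\ even}\binom{2n}{k}=2^{2n-1}=\sum_{k\ odd}\binom{2n}{k}$, which is consistent with the principal symbol being an isomorphism away from the zero section.

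Next I would pin down that $\tilde{\mathcal{D}}(0)$ really is the lift of $\mathcal{D}(0)$ in the sense demanded by Pan's theorem. The operators $\pa_{-}$ and $\pa_{+}$ arise from the pointwise splitting $d=\pa_{-}+\pa_{+}$ determined by $J$, and $\pa^{\ast}_{-}$ is the metric adjoint; all of these are natural local differential operators built from the almost K\"{a}hler data $(J,g)$. Since $\pi:(\tilde{X},\tilde{J})\rightarrow(X,J)$ is a local isometry with $\tilde{g}=\pi^{\ast}g$ and $\tilde{J}=\pi^{\ast}J$, pulling back commutes with this construction, so $\pi^{\ast}\pa_{-}=\tilde{\pa}_{-}$, $\pi^{\ast}\pa^{\ast}_{-}=\tilde{\pa}^{\ast}_{-}$, and hence $\tilde{\mathcal{D}}(0)=\sqrt{2}(\tilde{\pa}_{-}+\tilde{\pa}^{\ast}_{-})$ is exactly the operator induced on $\tilde{X}$ by the lifted almost K\"{a}hler structure. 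With this identification, the theorem of \cite{Pan} applied to $P=\mathcal{D}(0)$ and $\Ga=\pi_{1}(X)$ yields $L^{2}Index_{\Ga}(\tilde{\mathcal{D}}(0))=Index(\mathcal{D}(0))$, and chaining this with Corollary \ref{C2} gives the asserted equalities.

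I expect the main (and essentially the only) obstacle to be the bookkeeping in the previous paragraph: confirming that $\mathcal{D}(0)$ qualifies as a determined elliptic operator in the precise framework of \cite{Pan} and that its lift coincides with the naturally induced operator on the cover. Once the local-isometry identification $\pi^{\ast}\pa_{-}=\tilde{\pa}_{-}$ is recorded, no additional analytic input is required, and the corollary follows formally.
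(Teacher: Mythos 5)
Your proposal is correct and follows exactly the route the paper intends: the paper states Corollary \ref{C4} immediately after quoting Atiyah's $L^{2}$-index theorem from \cite{Pan} and offers no further proof, so the content is precisely your two steps --- Corollary \ref{C2} for $\chi(X)=Index(\mathcal{D}(0))$ and the $L^{2}$-index theorem applied to the elliptic operator $\mathcal{D}(0)$ (ellipticity being Proposition \ref{P4}) for the second equality. Your additional care in checking that the lift $\tilde{\mathcal{D}}(0)$ coincides with the operator built from the pulled-back almost K\"ahler structure is a detail the paper leaves implicit, but it is the right thing to verify.
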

\begin{remark}
Noticing that 
$$d'^{2}=\pa_{J}^{2}+[\pa_{J},\bar{A}_{J}]=\pa^{2}_{J}-\bar{\pa}^{2}_{J}.$$
The operator $d'^{2}$ always not zero. But it's easy to see 
$$\bigoplus_{k=even/odd}\mathcal{H}^{k}_{(2);d'}\subset\ker(d'+d'^{\ast})\cap\bigoplus_{k=even/odd}\Om^{k}_{(2)}.$$
\end{remark}

\section{Euler number of the hyperbolic symplectic manifolds}
\subsection{Vanishing theorems}
We begin the proof the Theorem \ref{T4} by recalling some basis notions in Hodge theory and almost K\"{a}hler geometry. If $X$ is an oriented complete Riemannian manifold, let $d^{\ast}$ be the adjoint operator of $d$ acting on the space of $L^{2}$ $k$-forms. Denote by $\Om^{k}_{(2)}(X)$ and $\mathcal{H}^{k}_{(2)}(X)$ the spaces of $L^{2}$ $k$-forms and $L^{2}$ harmonic $k$-forms, respectively. By elliptic regularity and completeness of the manifold, a $k$-form in $\mathcal{H}^{k}_{(2)}(X)$ is smooth, closed and co-closed.

Suoppse that $(X,J,\w)$ is a complete almost K\"{a}hler manifold. We denote by
$$\mathcal{H}^{k}_{(2);d'}(X)=\{\a\in\Om^{k}_{(2)}(X):\De_{d'}\a=0 \}$$
the space of $L^{2}$ $\De_{d'}$-harmonic $k$-forms on $X$.
	
We follow the method of Gromov's \cite{Gro} to choose a sequence of cutoff functions $\{f_{\varepsilon}\}$ satisfying the following conditions:\\
(i) $f_{\varepsilon}$ is smooth and takes values in the interval $[0,1]$, furthermore, $f_{\varepsilon}$ has compact support.\\
(ii) The subsets $f^{-1}_{\varepsilon}\subset X$, i.e., of the points $x\in X$ where $f_{\varepsilon}(x)=1$ exhaust $X$ as $\varepsilon\rightarrow0$.\\
(iii) The differential of $f_{\varepsilon}$ everywhere bounded by $\varepsilon$,
$$\|df_{\varepsilon}\|_{L^{\infty}(X)}=\sup_{x\in X}|df_{\varepsilon}|\leq\varepsilon.$$
Thus one obtains another useful
\begin{lemma}
Let $(X,J,\w)$ be a complete almost K\"{a}hler manifold. If an $L^{2}$ $k$-form $\a$ is $\De_{d'}$-harmonic, then $d'\a=d'^{\ast}\a=0$.
\end{lemma}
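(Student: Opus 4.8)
The plan is to run Gromov's cutoff argument. On a closed manifold one would simply write $0=\langle\De_{\pa_{-}}\a,\a\rangle=\|\pa_{-}\a\|^{2}+\|\pa^{\ast}_{-}\a\|^{2}$ and conclude at once; on a complete noncompact $X$ this global pairing is illegitimate, since $\a$ is only $L^{2}$ and integrating by parts over all of $X$ may generate uncontrolled terms. Instead I would test the equation $\De_{\pa_{-}}\a=0$ against the compactly supported form $f^{2}_{\varepsilon}\a$, where $\{f_{\varepsilon}\}$ is the family of cutoff functions fixed above. The operator $\De_{\pa_{-}}=\frac14(\De_{d}+\De_{d^{\La}})$ is elliptic (its principal symbol is $\tfrac12|\xi|^{2}\mathrm{Id}$, the zeroth-order pieces $\bar{A}_{J},\bar{A}^{\ast}_{J}$ being invisible to it), so by elliptic regularity $\a$ is smooth; thus $f^{2}_{\varepsilon}\a$ is a smooth compactly supported form and every integration by parts below is free of boundary terms.

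First I would expand, using $\De_{\pa_{-}}\a=0$,
$$0=\langle\De_{\pa_{-}}\a,f^{2}_{\varepsilon}\a\rangle=\langle\pa_{-}\a,\pa_{-}(f^{2}_{\varepsilon}\a)\rangle+\langle\pa^{\ast}_{-}\a,\pa^{\ast}_{-}(f^{2}_{\varepsilon}\a)\rangle.$$
Since $\pa_{-}=\pa_{J}+\bar{A}_{J}$ is first order with zeroth-order part $\bar{A}_{J}$ (which annihilates functions and hence commutes with multiplication by $f^{2}_{\varepsilon}$), the Leibniz rule gives
$$\pa_{-}(f^{2}_{\varepsilon}\a)=f^{2}_{\varepsilon}\pa_{-}\a+(\pa_{J}f^{2}_{\varepsilon})\wedge\a,\qquad \pa^{\ast}_{-}(f^{2}_{\varepsilon}\a)=f^{2}_{\varepsilon}\pa^{\ast}_{-}\a+[\pa^{\ast}_{J},f^{2}_{\varepsilon}]\a,$$
where both correction terms are algebraic of order zero, with pointwise norm at most $C|df^{2}_{\varepsilon}|=2Cf_{\varepsilon}|df_{\varepsilon}|\leq 2C\varepsilon f_{\varepsilon}$ by condition (iii). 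Substituting, I would arrive at
$$0=\|f_{\varepsilon}\pa_{-}\a\|^{2}+\|f_{\varepsilon}\pa^{\ast}_{-}\a\|^{2}+R_{\varepsilon},$$
and, applying Cauchy--Schwarz while keeping the surviving factor $f_{\varepsilon}$ attached to the differentiated term,
$$|R_{\varepsilon}|\leq 2C\varepsilon\big(\|f_{\varepsilon}\pa_{-}\a\|+\|f_{\varepsilon}\pa^{\ast}_{-}\a\|\big)\|\a\|_{L^{2}(X)}.$$

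Writing $s_{\varepsilon}^{2}=\|f_{\varepsilon}\pa_{-}\a\|^{2}+\|f_{\varepsilon}\pa^{\ast}_{-}\a\|^{2}$ and using $\|f_{\varepsilon}\pa_{-}\a\|+\|f_{\varepsilon}\pa^{\ast}_{-}\a\|\leq\sqrt{2}\,s_{\varepsilon}$, the identity yields $s_{\varepsilon}^{2}\leq 2\sqrt{2}\,C\varepsilon\,s_{\varepsilon}\|\a\|_{L^{2}(X)}$, hence $s_{\varepsilon}\leq 2\sqrt{2}\,C\varepsilon\|\a\|_{L^{2}(X)}$. Because $\a\in L^{2}$, the right-hand side tends to $0$ as $\varepsilon\to0$. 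Finally, for any fixed compact $K\subset X$, condition (ii) gives $f_{\varepsilon}\equiv1$ on $K$ for $\varepsilon$ small, so $\int_{K}|\pa_{-}\a|^{2}\leq\|f_{\varepsilon}\pa_{-}\a\|^{2}\leq s_{\varepsilon}^{2}\to0$, forcing $\pa_{-}\a=0$ on $K$, and likewise $\pa^{\ast}_{-}\a=0$; as $K$ is arbitrary this gives $\pa_{-}\a=\pa^{\ast}_{-}\a=0$ on $X$.

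The argument is conceptually routine once the cutoffs are in place; the only points demanding care are bookkeeping. One must verify that the zeroth-order parts $\bar{A}_{J},\bar{A}^{\ast}_{J}$ genuinely drop out of the commutators with $f^{2}_{\varepsilon}$, so that all error terms are controlled purely by $df_{\varepsilon}$ through the first-order parts $\pa_{J},\pa^{\ast}_{J}$; and one must, in bounding $R_{\varepsilon}$, always retain a factor $f_{\varepsilon}$ with the differentiated form, so that Cauchy--Schwarz reproduces exactly the quantities $\|f_{\varepsilon}\pa_{-}\a\|,\|f_{\varepsilon}\pa^{\ast}_{-}\a\|$ appearing on the left and allows them to be absorbed. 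The $L^{2}$ hypothesis on $\a$ enters precisely to keep $\|\a\|_{L^{2}(X)}$ finite in the estimate for $R_{\varepsilon}$.
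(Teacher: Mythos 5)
Your proposal is correct and follows essentially the same route as the paper: testing $\De_{\pa_{-}}\a=0$ against $f^{2}_{\varepsilon}\a$, isolating the error terms generated by $df_{\varepsilon}$, absorbing them via Cauchy--Schwarz into $\|f_{\varepsilon}\pa_{-}\a\|$ and $\|f_{\varepsilon}\pa^{\ast}_{-}\a\|$, and letting $\varepsilon\to0$. The only cosmetic difference is that the paper imposes $|df_{\varepsilon}|^{2}<\varepsilon f_{\varepsilon}$ while you use $|df_{\varepsilon}|\leq\varepsilon$ directly; both yield the same absorption and conclusion.
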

\begin{proof}
	We want to justify the integral identity
	$$\langle\De_{d'}\a,\a\rangle=\langle d'\a,d'\a\rangle+\langle d'^{\ast}\a,d'^{\ast}\a\rangle$$
	If $d'\a$ and $d'^{\ast}\a$  are $L^{2}$ (i.e., square integrable on $X$), then this follows by Lemma \cite[1.1. A]{Gro}. To handle the general case we cutoff $\a$ and obtain by a simple computation
	\begin{equation*}
	\begin{split}
	0&=\langle\De_{d'}\a,f^{2}_{\varepsilon}\a\rangle\\
	&=\langle d'\a,d'(f^{2}_{\varepsilon}\a)\rangle+\langle d'^{\ast}\a,d'^{\ast}(f^{2}_{\varepsilon}\a)\rangle\\
	&=\langle d'\a,f^{2}_{\varepsilon}(d'\a)\rangle+\langle d'\a,2f_{\varepsilon}\pa f_{\varepsilon}\wedge\a\rangle+\langle d'^{\ast}\a,f^{2}_{\varepsilon}(d'^{\ast}\a)\rangle-\langle d'^{\ast}\a,\ast(2f_{\varepsilon}\bar{\pa} f_{\varepsilon}\wedge\ast\a)\rangle\\
	&=I_{1}(\varepsilon)+I_{2}(\varepsilon),\\
	\end{split}
	\end{equation*}
	where
	\begin{equation*}
	\begin{split}
	|I_{1}(\varepsilon)|&=\langle d'\a,f^{2}_{\varepsilon}d'\a\rangle+\langle d'^{\ast}\a,f^{2}_{\varepsilon}d'^{\ast}\a\rangle\\
	&=\int_{X}f^{2}_{\varepsilon}(|d'\a|^{2}+|d'^{\ast}\a|^{2})\\
	\end{split}
	\end{equation*}
	and
	\begin{equation*}
	\begin{split}
	|I_{2}(\varepsilon)|&=|\langle d'\a,2f_{\varepsilon}\pa f_{\varepsilon}\wedge\a\rangle-\langle d'^{\ast}\a,\ast(2f_{\varepsilon}\bar{\pa} f_{\varepsilon}\wedge\ast\a)\rangle|\\
	&\leq|\langle d'\a,2f_{\varepsilon}\pa f_{\varepsilon}\wedge\a\rangle|+|\langle d'^{\ast}\a,\ast(2f_{\varepsilon}\bar{\pa} f_{\varepsilon}\wedge\ast\a)\rangle|\\
	&\lesssim\int_{X}|df_{\varepsilon}|\cdot|f_{\varepsilon}|\cdot|\a|(|d'\a|+|d'^{\ast}\a|).\\
	\end{split}
	\end{equation*}
	Then we choose $f_{\varepsilon}$ such that $|df_{\varepsilon}|^{2}<\varepsilon f_{\varepsilon}$ on $X$ and estimate $I_{2}$ by Schwartz inequality. Then
	$$|I_{2}(\varepsilon)|\lesssim \varepsilon\|f_{\varepsilon}\a\|_{L^{2}(X)}\big{(}\int_{X}f^{2}_{\varepsilon}(|d'\a|^{2}+|d'^{\ast}\a|^{2})\big{)}^{\frac{1}{2}},$$
	and hence $|I_{2}|\rightarrow0$ for $\varepsilon\rightarrow0$.
\end{proof}
Now, we begin to give the lower bound on the spectrum of the operator $\De_{d'}$ on $\Om^{k}_{(2)}$ for $k\neq n$.
\begin{proof}[\textbf{Proof of Theorem \ref{T4}}]
	To simply notation we shall write $a\lesssim b$ for $a\leq const_{n}b$ and $a\approx b$, for $b\lesssim a\lesssim b$. Then we recall the operator $L^{k}:\Om^{p}\rightarrow\Om^{2n-p}$ for a given $p<n$ and $p+k=n$. By the Lefschetz theorem $L^{k}$ is a bijective quasi-isometry and so every $L^{2}$-form $\psi$ of degree $2n-p$ is the product $\psi=L^{k}\phi=\w^{k}\wedge\phi$, where $\phi\in\Om^{p}_{(2)}$ and 
	$$\|\psi\|_{L^{2}(X)}\approx\|\phi\|_{L^{2}(X)}.$$
	Since $L^{k}$ commutes with $\De_{d}+\De_{d^{\La}}$, we also have
	\begin{equation*}
	\begin{split}
	\langle(\De_{d}+\De_{d^{\La}})\psi,\psi\rangle&=\langle L^{k}(\De_{d}+\De_{d^{\La}})\phi,L^{k}\phi\rangle\\
	&\approx\langle(\De_{d}+\De_{d^{\La}})\phi,\phi\rangle.
	\end{split}
	\end{equation*}
	Then we write $\psi=d\eta+\psi'$, for $\eta=\theta\wedge\w^{k-1}\wedge\phi$ and $\psi'=\theta\wedge\w^{k-1}\wedge d\phi$, and observe that
	\begin{equation*}
	\begin{split}
	\|\eta\|_{L^{2}(X)}&\lesssim\|\theta\|_{L^{\infty}(X)}\|\phi\|_{L^{2}(X)}\\
	&\lesssim\|\theta\|_{L^{\infty}(X)}\|\psi\|_{L^{2}(X)}.
	\end{split}
	\end{equation*}
	Next, since
	$$\|d\phi\|^{2}_{L^{2}(X)}\lesssim\langle\De_{d}\phi,\phi\rangle\lesssim\langle(\De_{d}+\De_{d^{\La}})\phi,\phi\rangle\lesssim\langle(\De_{d}+\De_{d^{\La}})\psi,\psi\rangle,$$
	we have $$\|\psi'\|_{L^{2}(X)}\lesssim\|\theta\|_{L^{\infty}(X)}\langle(\De_{d}+\De_{d^{\La}})\psi,\psi\rangle^{\frac{1}{2}}.$$
	Now,
	$$\|\psi\|^{2}_{L^{2}(X)}=\langle\psi,\psi\rangle=\langle\psi,d\eta+\psi'\rangle\lesssim|\langle\psi,d\eta\rangle|+|\langle\psi,\psi'\rangle|,$$
	where 
	\begin{equation*}
	\begin{split}
	|\langle\psi,d\eta\rangle|&=|\langle d^{\ast}\psi,\eta\rangle|\\
	&\leq \langle d^{\ast}\psi,d^{\ast}\psi\rangle^{\frac{1}{2}}\|\eta\|_{L^{2}(X)}\\
	&\leq \langle \De_{d}\psi,\psi\rangle^{\frac{1}{2}}\|\eta\|_{L^{2}(X)}\\
	&\lesssim \langle\De_{d}\psi,\psi\rangle^{\frac{1}{2}}\|\theta\|_{L^{\infty}(X)}\|\psi\|_{L^{2}(X)}\\
	&\lesssim\|\theta\|_{L^{\infty}(X)}\langle(\De_{d}+\De_{d^{\La}})\psi,\psi\rangle^{\frac{1}{2}}\|\psi\|_{L^{2}(X)}
	\end{split}
	\end{equation*}
	and 
	$$|\langle\psi,\psi'\rangle|\leq\|\psi\|_{L^{2}(X)}\|\psi'\|_{L^{2}(X)}\lesssim\|\theta\|_{L^{\infty}(X)}\langle(\De_{d}+\De_{d^{\La}})\psi,\psi\rangle^{\frac{1}{2}}\|\psi\|_{L^{2}(X)}.$$
	This yields the desired estimate
	$$\|\phi\|_{L^{2}(X)}\lesssim\|\psi\|_{L^{2}(X)}\lesssim\langle(\De_{d}+\De_{d^{\La}})\psi,\psi\rangle^{\frac{1}{2}}\lesssim\langle(\De_{d}+\De_{d^{\La}})\phi,\phi\rangle^{\frac{1}{2}}$$
	for the forms $\phi$ of degree $p<n$. The case $p>n$ follows by the Poincar\'{e} duality as the operator $\ast:\Om^{p}\rightarrow\Om^{2n-p}$ commutes with $\De_{d}+\De_{d^{\La}}$ and is isometric for the $L^{2}$-norms.	
\end{proof}
For the $d$(sublinear) case, we prove the following result.
\begin{proposition}\label{P5}
	Let $(X,J,\w)$ be a complete 2n-dimensional almost K\"{a}hler manifold with a $d$(sublinear) symplectic form $\w$. Then for any $k\neq n$, 
	$$\mathcal{H}^{k}_{(2);d'}(X)=\{0\}.$$
\end{proposition}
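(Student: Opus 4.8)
The plan is to run the Gromov-type vanishing argument (as used by Cao-Xavier and Jost-Zuo in the parabolic case) in parallel with the proof of Theorem \ref{T4}; the essential difference is that a merely sublinear primitive $\theta$ produces no uniform spectral gap, so one proves outright vanishing instead of a positive lower bound. First I would reduce to the range $k<n$: since $\ast:\Om^{k}_{(2)}(X)\to\Om^{2n-k}_{(2)}(X)$ is an $L^{2}$-isometry commuting with $\De_{d}+\De_{d^{\La}}$, hence with $\De_{\pa_{-}}=\tfrac{1}{4}(\De_{d}+\De_{d^{\La}})$, it identifies $\mathcal{H}^{k}_{(2);\pa_{-}}(X)$ with $\mathcal{H}^{2n-k}_{(2);\pa_{-}}(X)$, so it is enough to treat $k<n$. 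Note also that, by the preceding lemma together with $\De_{\pa_{-}}=\De_{\pa_{+}}$ (Lemma \ref{L4}), every $\De_{\pa_{-}}$-harmonic $L^{2}$ form $\b$ satisfies $\pa_{\pm}\b=\pa^{\ast}_{\pm}\b=0$, and therefore $d\b=d^{\ast}\b=0$.

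Now fix $\a\in\mathcal{H}^{k}_{(2);\pa_{-}}(X)$ with $k<n$, put $j=n-k>0$, and set $\psi=L^{j}\a=\w^{j}\wedge\a\in\Om^{2n-k}_{(2)}(X)$. Because $[\De_{d}+\De_{d^{\La}},L]=0$, the power $L^{j}$ commutes with $\De_{\pa_{-}}$, so $\psi$ is again $L^{2}$ and $\De_{\pa_{-}}$-harmonic; by the remark above $d\psi=d^{\ast}\psi=0$, and by the Lefschetz theorem (as in the proof of Theorem \ref{T4}) $L^{j}$ is a bijective quasi-isometry on forms, so $\|\psi\|_{L^{2}(X)}\approx\|\a\|_{L^{2}(X)}$ and it suffices to show $\psi=0$. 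The key algebraic observation is that $\psi$ admits an explicit global primitive: using $d\theta=\w$, $d\w=0$ and $d\a=0$,
$$\psi=\w^{j}\wedge\a=d(\theta\wedge\w^{j-1}\wedge\a)=d\xi,\qquad \xi:=\theta\wedge\w^{j-1}\wedge\a.$$
The obstruction is that $\xi$ is not square-integrable: the sublinear bound $|\theta(x)|\leq c(1+\rho_{g}(x,x_{0}))$ only gives $|\xi|\lesssim(1+\rho_{g})|\a|$, so one cannot integrate $\langle d\xi,\psi\rangle$ by parts directly.

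To circumvent this I would replace Gromov's bounded cutoffs by cutoffs tuned to the sublinear growth. Choose compactly supported $f_{R}$ with $0\leq f_{R}\leq 1$, equal to $1$ on $B(x_{0},R)$, vanishing outside $B(x_{0},R^{2})$, and with $|df_{R}|\leq(\rho_{g}\log R)^{-1}$ on the annulus $R\leq\rho_{g}\leq R^{2}$ (a logarithmic cutoff); then on $\{df_{R}\neq0\}$ one has $|df_{R}|\,|\theta|\lesssim(1+\rho_{g})(\rho_{g}\log R)^{-1}\lesssim(\log R)^{-1}$, so $\ep_{R}:=\||df_{R}|\,|\theta|\|_{L^{\infty}(X)}\to0$ as $R\to\infty$. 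Since $f_{R}^{2}\psi$ has compact support and $d^{\ast}\psi=0$, integration by parts gives
$$\langle\psi,f_{R}^{2}\psi\rangle=\langle d\xi,f_{R}^{2}\psi\rangle=\langle\xi,d^{\ast}(f_{R}^{2}\psi)\rangle=-2\langle\xi,\ast(f_{R}\,df_{R}\wedge\ast\psi)\rangle,$$
and hence, using $|\xi|\lesssim|\theta|\,|\a|$,
$$|\langle\psi,f_{R}^{2}\psi\rangle|\lesssim\int_{X}f_{R}\,|df_{R}|\,|\theta|\,|\a|\,|\psi|\leq\ep_{R}\,\|\a\|_{L^{2}(X)}\,\|\psi\|_{L^{2}(X)}.$$
Letting $R\to\infty$, the left-hand side tends to $\|\psi\|^{2}_{L^{2}(X)}$ while the right-hand side tends to $0$; thus $\psi=0$, and injectivity of $L^{j}$ forces $\a=0$. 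I expect the main obstacle to be exactly the construction and book-keeping of these cutoffs: one must use completeness (to get a proper distance function and the exhausting sublevel sets) together with the sublinear bound to force $|df_{R}|\,|\theta|\to0$ uniformly. This is precisely the point at which no uniform positive lower bound survives, which is why only the vanishing statement, and not an estimate of the type in Theorem \ref{T4}, can be expected.
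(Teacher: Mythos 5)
Your argument is correct and follows essentially the same route as the paper's proof: wedge the $\De_{\pa_{-}}$-harmonic form with powers of $\w$, observe the result is again $L^{2}$-harmonic and exact with primitive built from $\theta$, integrate by parts against a cutoff, and use the sublinear growth of $\theta$ to kill the boundary term before invoking injectivity of the Lefschetz operator. The only differences are technical: the paper applies a single $L$ (showing $\w\wedge\a=0$ and using injectivity of $L$ on $\Om^{k}$ for $k<n$) and controls the error term $(j+1)\int_{B_{j+1}\setminus B_{j}}|\a|^{2}$ with unit-width annulus cutoffs plus a subsequence extracted from the divergence of $\sum 1/(j+1)$, whereas you push to degree $2n-k$ with $L^{n-k}$ and use logarithmic cutoffs on $B_{R^{2}}\setminus B_{R}$ to make $|df_{R}|\,|\theta|$ uniformly small --- both devices are standard and equally valid here.
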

\begin{proof}
	By hypothesis, there exists a 1-form $\theta$ with $\w=d\theta$ and
	$$\|\theta(x)\|_{L^{\infty}(X)}\leq c(1+\rho(x,x_{0})),$$
	where $c$ is an absolute constant. In what follows we assume that the distance function $\rho(x, x_{0})$ is smooth for $x\neq x_{0}$. The general case follows easily by an approximation argument.
	
	Let $\eta:\mathbb{R}\rightarrow\mathbb{R}$ be smooth, $0\leq\eta\leq1$,
	$$
	\eta(t)=\left\{
	\begin{aligned}
	1, &  & t\leq0 \\
	0,  &  & t\geq1
	\end{aligned}
	\right.
	$$
	and consider the compactly supported function
	$$f_{j}(x)=\eta(\rho(x_{0},x)-j),$$
	where $j$ is a positive integer.
	
	Let $\a$ be a $(\De_{d}+\De_{d^{\La}})$-harmonic $k$-form in $L^{2}$, $k<n$, and consider the form $\Phi=\a\wedge\theta$. Observing that $d^{\ast}(\a\wedge\w)=d^{\La_{\ast}}(\a\wedge\w)=0$ since $\w\wedge\a$ is a $(\De_{d}+\De_{d^{\La}})$-harmonic $(k+2)$-form in $L^{2}$, and noticing  that $f_{j}\Phi$ has compact support, one has
	\begin{equation}\label{E1}
	0=\langle d^{\ast}(\w\wedge\a),f_{j}\Phi\rangle=\langle\w\wedge\a,d(f_{j}\Phi)\rangle.
	\end{equation}
	We further note that, since $\w=d\theta$ and $d\a=0$,
	\begin{equation}\label{E2}
	\begin{split}
	0&=\langle\w\wedge\a,d(f_{j}\Phi)\rangle\\
	&=\langle\w\wedge\a,f_{j}d\Phi\rangle+\langle\w\wedge\a,df_{j}\wedge\Phi\rangle\\
	&=\langle\w\wedge\a,f_{j}\w\wedge\a\rangle+\langle\w\wedge\a,df_{j}\wedge\theta\wedge\a\rangle.\\
	\end{split}
	\end{equation}
	Since $0\leq f_{j}\leq 1$ and $\lim_{j\rightarrow\infty}f_{j}(x)(\ast\a)(x)=\ast\a(x)$, it follows from the dominated convergence theorem that
	\begin{equation}\label{E3}
	\lim_{j\rightarrow\infty}\langle \w\wedge\a,f_{j}\w\wedge\a\rangle_{L^{2}(X)}=\|\w\wedge\a\|^{2}_{L^{2}(X)}.
	\end{equation}
	Since $\w$ is bounded, $supp(df_{j})\subset B_{j+1}\backslash B_{j}$ and $\|\theta(x)\|_{L^{\infty}}=O(\rho(x_{0},x))$, one obtains that
	\begin{equation}\label{E4}
	|\langle\w\wedge\a,df_{j}\wedge\theta\wedge\a\rangle|\leq (j+1)C\int_{B_{j+1}\backslash B_{j}}|\a(x)|^{2}dx,
	\end{equation}
	where $C$ is a constant independent of $j$.
	
	We claim that there exists a subsequence $\{j_{i}\}_{i\geq1}$ such that
	\begin{equation}\label{E5}
	\lim_{i\rightarrow\infty}(j_{i}+1)\int_{B_{j_{i}+1}\backslash B_{j_{i}}}|\a(x)|^{2}dx=0.
	\end{equation}
	If not, there exists a positive constant $a$ such that
	$$\lim_{j\rightarrow\infty}(j+1)\int_{B_{j+1}\backslash B_{j}}|\a(x)|^{2}dx\geq a>0.$$
	This inequality implies
	\begin{equation}\nonumber
	\begin{split}
	\int_{X}|\a(x)|^{2}dx&=\sum_{j=0}^{\infty}\int_{B_{j+1}\backslash B_{j}}|\a(x)|^{2}dx\\
	&\geq a\sum_{j=0}^{\infty}\frac{1}{j+1}\\
	&=+\infty\\
	\end{split}
	\end{equation}
	which is a contradiction to the assumption $\int_{X}|\a(x)|^{2}dx<\infty$. Hence, there exists a subsequence $\{j_{i}\}_{i\geq1}$ for which (\ref{E5}) holds. Using (\ref{E4}) and (\ref{E5}), one obtains
	\begin{equation}\label{E6}
	\lim_{j\rightarrow\infty}\langle\w\wedge\a,df_{j}\wedge\theta\wedge\a\rangle=0
	\end{equation}
	It now follows from (\ref{E2}), (\ref{E3}) and (\ref{E6}) that $\w\wedge\a=0$. Since $L$ is injective $k<n$, $\a=0$ as desired.
\end{proof}
Following Theorem \ref{T4}, we then have
\begin{theorem}\label{T5}
	Let $(X,J,\w)$ be a complete $2n$-dimensional almost K\"{a}hler manifold with $d$(bounded) symplectic form $\w$, i.e., there exists a bounded $1$-form $\theta$ such that $\w=d\theta$. There is a uniform positive constant $C$ only depends on $n$ with following significance. If the Nijenhuis tensor $N_{J}$ satisfies
	$$C\|N_{J}\|_{L^{\infty}(X)}\leq\|\theta\|^{-1}_{L^{\infty}(X)},$$
	then when $n=odd/even$,
	$$\ker(d'+d'^{\ast})\cap\bigoplus_{k=even/odd}\Om^{k}_{(2)}=\{0\}.$$
\end{theorem}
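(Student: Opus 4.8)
The plan is to play the spectral lower bound of Theorem~\ref{T4} against an upper bound produced by the kernel equation. Write $D:=\pa_{-}+\pa^{\ast}_{-}$ and treat the case $n$ odd with $\a\in\ker D\cap\bigoplus_{k\ even}\Om^{k}_{(2)}(X)$; the case $n$ even with $\a$ odd is identical after interchanging the two parities. The role of the parity hypothesis is precisely that every homogeneous component of $\a$ has degree $k\neq n$ (the even degrees $0,2,\dots,2n$ miss the odd number $n$), so the estimate of Theorem~\ref{T4} applies to each component and hence, since $\De_{d}+\De_{d^{\La}}$ preserves degree, to $\a$ itself. Using $\De_{\pa_{-}}=\tfrac14(\De_{d}+\De_{d^{\La}})$ from Lemma~\ref{L1}, this gives the lower bound
$$\langle\De_{\pa_{-}}\a,\a\rangle\geq const_{n}\,\|\theta\|^{-2}_{L^{\infty}(X)}\,\|\a\|^{2}_{L^{2}(X)}.$$

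Next I would record the algebraic identity for $D^{2}$. Because $\pa_{-}^{2}\neq0$ in the non-integrable case, expanding $(\pa_{-}+\pa^{\ast}_{-})^{2}$ gives $D^{2}=\De_{\pa_{-}}+\pa_{-}^{2}+(\pa^{\ast}_{-})^{2}$. Pairing with $\a$, noting that $(\pa^{\ast}_{-})^{2}$ is the adjoint of $\pa_{-}^{2}$, and using $D\a=0$, I obtain
$$\langle\De_{\pa_{-}}\a,\a\rangle=-2\,\mathrm{Re}\,\langle\pa_{-}^{2}\a,\a\rangle.$$
Since $X$ is complete and $\a\in L^{2}$ lies in the kernel of the elliptic first-order operator $D$, the integration by parts behind this identity must be justified by the cutoff functions $\{f_{\varepsilon}\}$ exactly as in the Lemma preceding Theorem~\ref{T4}; the same argument yields $\pa_{-}\a,\pa^{\ast}_{-}\a\in L^{2}$ and $\langle\De_{\pa_{-}}\a,\a\rangle=\|\pa_{-}\a\|^{2}+\|\pa^{\ast}_{-}\a\|^{2}$.

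Now I would estimate the right-hand side by the Nijenhuis tensor. By the Remark, $\pa_{-}^{2}=\pa_{J}^{2}-\bar{\pa}_{J}^{2}$, and the relations from $d^{2}=0$ give $\pa_{J}^{2}=-(A_{J}\bar{\pa}_{J}+\bar{\pa}_{J}A_{J})$ and $\bar{\pa}_{J}^{2}=-(\bar{A}_{J}\pa_{J}+\pa_{J}\bar{A}_{J})$. Every summand carries exactly one factor of the zeroth-order operators $A_{J},\bar{A}_{J}$, whose pointwise norms are bounded by $\|N_{J}\|_{L^{\infty}(X)}$ via $(A_{J}\a+\bar{A}_{J}\a)(X,Y)=\tfrac14\a(N_{J}(X,Y))$, and one factor among $\pa_{J},\bar{\pa}_{J}$ (or their adjoints, after moving the $0$-order factor across the inner product). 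Hence
$$|\langle\pa_{-}^{2}\a,\a\rangle|\lesssim\|N_{J}\|_{L^{\infty}(X)}\big(\|\pa_{J}\a\|+\|\bar{\pa}_{J}\a\|+\|\pa^{\ast}_{J}\a\|+\|\bar{\pa}^{\ast}_{J}\a\|\big)\|\a\|.$$
To rewrite the first-order norms through $\De_{\pa_{-}}$, I use $\pa_{J}=\pa_{-}-\bar{A}_{J}$ and $\bar{\pa}_{J}=\pa_{+}-A_{J}$ (and the adjoint versions) together with $\De_{\pa_{-}}=\De_{\pa_{+}}$ from Lemma~\ref{L1}, which bounds each of the four norms by $\langle\De_{\pa_{-}}\a,\a\rangle^{1/2}+const_{n}\|N_{J}\|_{L^{\infty}(X)}\|\a\|$.

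Writing $E:=\langle\De_{\pa_{-}}\a,\a\rangle$, the identity of the second paragraph then reads $E\lesssim\|N_{J}\|_{L^{\infty}(X)}E^{1/2}\|\a\|+\|N_{J}\|^{2}_{L^{\infty}(X)}\|\a\|^{2}$, and a Young inequality absorbs the middle term to leave the upper bound $E\leq const_{n}\|N_{J}\|^{2}_{L^{\infty}(X)}\|\a\|^{2}$. Comparing with the lower bound of the first paragraph, a nonzero $\a$ would force $\|N_{J}\|_{L^{\infty}(X)}\gtrsim\|\theta\|^{-1}_{L^{\infty}(X)}$; taking $C=C(n)$ to be the resulting dimensional constant (enlarged by a fixed factor to clear the borderline case), the smallness hypothesis $C\|N_{J}\|_{L^{\infty}(X)}\leq\|\theta\|^{-1}_{L^{\infty}(X)}$ contradicts $\a\neq0$, so $\a=0$. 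The main obstacle I anticipate is analytic rather than algebraic: pushing the cutoff argument through for the operator $D$, whose square is not a genuine Laplacian because of the $\pa_{-}^{2}$ term, so as to legitimately integrate by parts and to know $\pa_{-}\a,\pa^{\ast}_{-}\a\in L^{2}$; and then fusing the several absorption constants into a single $C=C(n)$ that is consistent with the equality-type threshold in the hypothesis.
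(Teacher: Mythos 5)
Your proposal is correct and follows essentially the same route as the paper: both prove $\langle(\pa_{-}+\pa^{\ast}_{-})^{2}\a,\a\rangle$ equals $\De_{\pa_{-}}=\tfrac14(\De_{d}+\De_{d^{\La}})$ plus zeroth-order corrections controlled by $\|N_{J}\|_{L^{\infty}(X)}$, bound those corrections by Cauchy--Schwarz and Young against $\langle(\De_{d}+\De_{d^{\La}})\a,\a\rangle$, and play the result off the spectral gap of Theorem \ref{T4} using the parity observation that even degrees miss $n$ when $n$ is odd. The only differences are organizational (you phrase the conclusion as a contradiction from $D\a=0$ and expand $\pa_{-}^{2}=\pa_{J}^{2}-\bar{\pa}_{J}^{2}$ termwise, while the paper groups the corrections as graded commutators and derives a direct lower bound for $\langle D^{2}\a,\a\rangle$), and the integration-by-parts issue you flag is present, and equally unaddressed, in the paper's own proof.
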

\begin{proof}
	We only prove the case of $n=odd$. Let $\a=\sum_{k=0}^{n}\a_{2k}\in\Om^{even}(X)$ be an $L^{2}$-form on $X$, where $\a_{2k}\in\Om^{2k}_{(2)}(X)$. Noting that
	\begin{equation*}
	\begin{split}
	(d'+d'^{\ast})^{2}&=[d',d'^{\ast}]+(d')^{2}+(d'^{\ast})^{2}\\
	&=[d',d'^{\ast}]+(\pa_{J}^{2}+[\pa_{J},\bar{A}_{J}])+((\pa^{\ast}_{J})^{2}+[\pa^{\ast}_{J},\bar{A}^{\ast}_{J}])\\
	&=\frac{1}{4}(\De_{d}+\De_{d^{\La}})+([\pa_{J},\bar{A}_{J}]-[\bar{\pa}_{J},A_{J}])+([\pa^{\ast}_{J},\bar{A}^{\ast}_{J}]-[\bar{\pa}^{\ast}_{J},A^{\ast}_{J}]).\\
	\end{split}
	\end{equation*}
	Here we use the identities $\pa_{J}^{2}+[\bar{\pa}_{J},A_{J}]=0$ and $(\pa^{\ast}_{J})^{2}+[\bar{\pa}^{\ast}_{J},A^{\ast}_{J}]=0$.
	By the inner product
	\begin{equation*}
	\begin{split}
	I:&=\langle([\pa_{J},\bar{A}_{J}]-[\bar{\pa}_{J},A_{J}]+[\pa^{\ast}_{J},\bar{A}^{\ast}_{J}]-[\bar{\pa}^{\ast}_{J},A^{\ast}_{J}])\a,\a\rangle\\
	&=\langle([\pa_{J}+\bar{A}_{J},\bar{A}_{J}]-[\bar{\pa}_{J}+A_{J},A_{J}]+[\pa^{\ast}_{J}+\bar{A}^{\ast}_{J},\bar{A}^{\ast}_{J}]-[\bar{\pa}^{\ast}_{J}+A^{\ast}_{J},A^{\ast}_{J}])\a,\a\rangle\\
	&=\langle([d',\bar{A}_{J}]-[d'',A_{J}]+[d'^{\ast},\bar{A}^{\ast}_{J}]-[d''^{\ast},A^{\ast}_{J}])\a,\a\rangle\\
	&=2{\rm{Re}}(\langle d'^{\ast}\a,\bar{A}_{J}\a\rangle+\langle d'\a,\bar{A}^{\ast}_{J}\a\rangle-\langle A_{J}\a,d''^{\ast}\a\rangle-\langle d''^{\ast}\a,A^{\ast}_{J}\a\rangle).\\
	\end{split}
	\end{equation*}
	Here we use the identities $A^{2}_{J}=\bar{A}^{2}_{J}=(A^{\ast}_{J})^{2}=(\bar{A}^{\ast}_{J})^{2}=0$. Therefore, we get
	\begin{equation*}
	\begin{split}
	|I|&\leq 2(\|d'^{\ast}\a\|\cdot\|\bar{A}_{J}\a\|+\| d'\a\|\cdot\|\bar{A}^{\ast}_{J}\a\|+\| A_{J}\a\|\cdot\|d''^{\ast}\a\|+\|d''^{\ast}\a\|\cdot\|A^{\ast}_{J}\a\|)\\
	&\leq C(\|d'^{\ast}\a\|+\| d'\a\|+\|d''\a\|+\|d''^{\ast}\a\|)\|N_{J}\|_{L^{\infty}(X)}\|\a\|\\
	&\leq C\varepsilon(\|d'^{\ast}\a\|^{2}+\| d'\a\|^{2}+\|d''\a\|^{2}+\|d''^{\ast}\a\|^{2})+\frac{1}{2\varepsilon}\|N_{J}\|^{2}_{L^{\infty}(X)}\|\a\|^{2}\\
	&\leq \frac{ C\varepsilon}{2}\langle(\De_{d}+\De_{d^{\La}})\a,\a\rangle+\frac{1}{2\varepsilon}\|N_{J}\|^{2}_{L^{\infty}(X)}\|\a\|^{2},
	\end{split}
	\end{equation*}
	where $C$ is a positive constant and where we have used the inequality
	$$2ab\leq\varepsilon a^{2}+\frac{1}{\varepsilon}b^{2},$$ 
	for any $\varepsilon>0$ and any real numbers $a$ and $b$. Noting that 
	\begin{equation*}
	\begin{split}
	\langle(\De_{d}+\De_{d^{\La}})\a,\a\rangle&=\langle\sum_{k=0}^{n}(\De_{d}+\De_{d^{\La}})\a_{2k},\a_{2k}\rangle\\
&=\sum_{k=0}^{n}\langle(\De_{d}+\De_{d^{\La}})\a_{2k},\a_{2k}\rangle.\\
	\end{split}
	\end{equation*}
	Following Theorem \ref{T4}, we then have
	\begin{equation*}
	\langle(\De_{d}+\De_{d^{\La}})\a,\a\rangle\geq\sum_{k=0}^{n}\la_{0}^{2}\langle\a_{2k},\a_{2k}\rangle=\la_{0}^{2}\|\a\|^{2}.
	\end{equation*}
	 Therefore, we get
	\begin{equation*}
	\begin{split}
	\langle (d'+d'^{\ast})^{2}\a,\a\rangle&\geq\frac{1}{4}\langle(\De_{d}+\De_{d^{\La}})\a,\a\rangle-|I|\\
	&\geq(\frac{1}{4}-\frac{1}{2}C\varepsilon)\langle(\De_{d}+\De_{d^{\La}})\a,\a\rangle-\frac{1}{2\varepsilon}\|N_{J}\|^{2}_{L^{\infty}(X)}\|\a\|^{2}\\
	&\geq((\frac{1}{4}-\frac{1}{2}C\varepsilon)\la^{2}_{0}-\frac{1}{2\varepsilon}\|N_{J}\|^{2}_{L^{\infty}(X)})\|\a\|^{2},\\
	\end{split}
	\end{equation*}
 We take $\varepsilon=\frac{1}{4C}$ and $2C\|N_{J}\|^{2}_{L^{\infty}(X)}\leq\frac{\la^{2}_{0}}{16}$, hence
	\begin{equation*}
	\langle (d'+d'^{\ast})^{2}\a,\a\rangle\geq\frac{\la^{2}_{0}}{16}\|\a\|^{2}.
	\end{equation*}
	Therefore, $\ker(d'+d'^{\ast})\cap\bigoplus_{k= even}\Om^{k}_{(2)}=\{0\}$. 
\end{proof}

\subsection{Non-vanishing theorems}
Let $E$ and $E'$ be $C^{\infty}$-vector bundles over a smooth manifold $X$, and $\mathcal{D}:C^{\infty}(E)\rightarrow C^{\infty}(E')$ be a differential operator between $C^{\infty}$-sections of these bundle. We also suppose that $X$ is a Riemannian manifold and $\Ga$ is a discrete group of isometrics of $X$, such that the differential operator $\mathcal{D}$ commutes with the action of $\Ga$. We consider a $\Ga$-invariant Hermitian line bundle $(L,\na)$ on $X$ we assume $X/\Ga$ is compact, and we state Atiyah's $L^{2}$-index theorem for $\mathcal{D}\otimes\na$.
\begin{theorem}\cite[Theorem 2.3.A]{Gro}\label{T7}
Let $\mathcal{D}$ be a first-order elliptic operator. Then there exists a closed nonhomogeneous form
$$I_{D}=I^{0}+I^{1}+\cdots+I^{n}\in\Om^{\ast}(X)=\Om^{0}\oplus\Om^{1}\oplus\cdots\oplus\Om^{n},\ n=\dim X,$$
invariant under $\Ga$, such that the $L^{2}$-index of the twisted operator $\mathcal{D}\otimes\na$ satisfies
$$L^{2}Index_{\Ga}(\mathcal{D}\otimes\na)=\int_{X/\Ga}I_{\mathcal{D}}\wedge\exp{[\w]},$$
where $[\w]$ is the Chern form of $\na$, and
$$\exp{[\w]}=1+[\w]+\frac{[\w]\wedge[\w]}{2!}+\frac{[\w]\wedge[\w]\wedge[\w]}{3!}+\cdots.$$
\end{theorem}
\begin{remark}
(1)  $L^{2}Index_{\Ga}(\mathcal{D}\otimes\na)\neq 0$ implies that
either $\mathcal{D}\otimes\na$ or its adjoint has a non-trivial $L^{2}$-kernel.\\
(2) The operators $\mathcal{D}$ used in the present paper are the operators $d+d^{\ast}$ and $d'+d'^{\ast}$. In these cases the $I_{0}$-component of $I_{\mathcal{D}}$ is non-zero. Hence $\int_{X/\Ga}I_{\mathcal{D}}\wedge\exp{\a[\w]}\neq0$, for almost all $\a$, provided the curvature form $[\w]$ is ``homologically nonsingular" $\int_{X/\Ga}[\w]^{n}\neq0$.
\end{remark}
Gromov defined the lower spectral bound $\la_{0}=\la_{0}(\mathcal{D})\geq 0$ as the upper bound of the negative numbers $\la$, such that $\|\mathcal{D}e\|_{L^{2}}\geq\la\|e\|_{L^{2}}$ for those sections $e$ of $E$ where $\mathcal{D}e$ in $L^{2}$. Let $\mathcal{D}$ be a $\Ga$-invariant elliptic operator on $X$ of the first order, and let $I_{D}=I^{0}+I^{1}+\cdots+I^{n}\in\Om^{\ast}(X)$ be the corresponding index form on $X$. Let $\w$ be a closed $\Ga$-invariant $2$-form on $X$ and denote by $I_{\a}^{n}$ the top component of product $I_{\mathcal{D}}\wedge\exp{\a\w}$, for $\a\in\mathbb{R}$. Hence $I_{\a}^{n}$ is an $\Ga$-invariant $n$-form on $X$, $\dim X=n$ depending on parameter $\a$.
\begin{theorem}(\cite[2.4.A. Theorem]{Gro})\label{T6}
Let $H^{1}_{dR}(X)=0$ and let $X/\Ga$ be compact and $\int_{X/\Ga}I_{\a}^{n}\neq 0$, for some $\a\in\mathbb{R}$. If the form $\w$ is $d$(bounded), then either $\la_{0}(\mathcal{D})=0$ or $\la_{0}(\mathcal{D}^{\ast})=0$, where $\mathcal{D}^{\ast}$ is the adjoint operator.
\end{theorem}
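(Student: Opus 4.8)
The plan is to twist $\mathcal{D}$ by a one-parameter family of line bundles whose curvatures run through the multiples $\a\w$, and then to pit the rigidity of the $L^{2}$-index (it is a polynomial in $\a$) against a perturbation estimate that follows from $\theta$ being bounded. First I would construct, for each $\a\in\R$, a $\bar{\Ga}$-equivariant Hermitian line bundle $(L_{\a},\na_{\a})$ over $X$ whose Chern form is $\a\w$. Because $\w=d\theta$ is exact, $L_{\a}$ is topologically trivial and one may take $\na_{\a}=d+\sqrt{-1}\,\a\,\theta$ on $X\times\C$, whose curvature is $\sqrt{-1}\,\a\,d\theta=\sqrt{-1}\,\a\,\w$. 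The hypothesis $H^{1}_{dR}(X)=0$ is what allows this to be made $\Ga$-equivariantly: for each $\gamma\in\Ga$ the $1$-form $\gamma^{\ast}\theta-\theta$ is closed (since $\w$ is $\Ga$-invariant), hence exact, and this exactness furnishes the transition data lifting the $\Ga$-action on $X$ to an action on $(L_{\a},\na_{\a})$ up to the circle of unitary automorphisms, i.e.\ to an action of the central extension $1\to S^{1}\to\bar{\Ga}\to\Ga\to1$ recalled above.

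With the bundle in hand, I would apply Atiyah's $L^{2}$-index theorem (Theorem \ref{T7}) to $\mathcal{D}\otimes\na_{\a}$. Since the Chern form of $\na_{\a}$ is $\a\w$, formula (\ref{E11}) gives
$$P(\a):=L^{2}\mathrm{Index}_{\bar{\Ga}}(\mathcal{D}\otimes\na_{\a})=\int_{X/\Ga}I_{\mathcal{D}}\wedge\exp(\a\w)=\int_{X/\Ga}I^{n}_{\a}.$$
Expanding $\exp(\a\w)=\sum_{j\geq0}\a^{j}\w^{j}/j!$ and extracting the top-degree part shows that $P(\a)$ is a polynomial in $\a$. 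By hypothesis $P$ is not identically zero, so it has only finitely many roots; in particular $P(\a)\neq0$ for arbitrarily small $\a>0$.

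The heart of the matter is a perturbation estimate. Writing the twisted operator as $\mathcal{D}\otimes\na_{\a}=\mathcal{D}+\a\,c(\theta)$, where $c(\theta)$ is the zeroth-order symbol multiplication by $\theta$, the boundedness of $\theta$ yields $\|\a\,c(\theta)\|\leq c_{n}\,\a\,\|\theta\|_{L^{\infty}(X)}$. Suppose, for contradiction, that both $\la_{0}(\mathcal{D})>0$ and $\la_{0}(\mathcal{D}^{\ast})>0$, and set $\la=\min(\la_{0}(\mathcal{D}),\la_{0}(\mathcal{D}^{\ast}))>0$. Then for every $L^{2}$-section $e$ we have
$$\|(\mathcal{D}\otimes\na_{\a})e\|\geq\|\mathcal{D}e\|-c_{n}\a\|\theta\|_{L^{\infty}(X)}\|e\|\geq(\la-c_{n}\a\|\theta\|_{L^{\infty}(X)})\|e\|,$$
and, since $(\mathcal{D}\otimes\na_{\a})^{\ast}=\mathcal{D}^{\ast}+\a\,c(\theta)^{\ast}$ with $c(\theta)^{\ast}$ equally bounded, the identical bound holds for the adjoint. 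Hence, whenever $c_{n}\a\|\theta\|_{L^{\infty}(X)}<\la$, both $\mathcal{D}\otimes\na_{\a}$ and its adjoint are bounded below and therefore have trivial $L^{2}$-kernel, forcing $P(\a)=0$. Choosing $\a>0$ simultaneously small enough for this inequality and outside the finite root set of $P$ produces a value with both $P(\a)\neq0$ and $P(\a)=0$, a contradiction. Thus $\la_{0}(\mathcal{D})=0$ or $\la_{0}(\mathcal{D}^{\ast})=0$.

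I expect the main obstacle to be the careful construction of the $\bar{\Ga}$-equivariant twisting bundle and the verification that Atiyah's formula (\ref{E11}) applies verbatim with the non-discrete group $\bar{\Ga}$ in place of $\Ga$; this is precisely where $H^{1}_{dR}(X)=0$ is used, and one must also keep track of how rescaling the curvature to $\a\w$ rescales the Chern form in (\ref{E11}). By contrast, the analytic step is routine once one observes that $\na_{\a}$ differs from the trivial connection by the bounded $1$-form $\a\theta$, so that twisting perturbs $\mathcal{D}$ only by a zeroth-order operator of norm $O(\a\|\theta\|_{L^{\infty}(X)})$ — and it is exactly here that the $d$(bounded) hypothesis on $\w$ enters.
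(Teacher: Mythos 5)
Your argument is correct and is essentially the intended one: the paper states this result as a quotation of Gromov's Theorem 2.4.A without reproving it, and the surrounding discussion (the central extension $1\to S^{1}\to\bar{\Ga}\to\Ga\to1$ built from the exactness of $\gamma^{\ast}\theta-\theta$, the index formula (\ref{E11}) as a polynomial in the twisting parameter, and the later subsection where $\na_{\varepsilon}=d+\sqrt{-1}\varepsilon\theta$ is treated as an $\varepsilon$-small zeroth-order perturbation) follows exactly the route you take. Your reconstruction --- nonvanishing of the index polynomial for small $\a$ versus the lower bound $\la-c_{n}\a\|\theta\|_{L^{\infty}(X)}$ on the twisted operator and its adjoint --- matches Gromov's proof, so there is nothing to correct.
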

Let $X$ be a closed almost K\"{a}hler manifold, with exact symplectic form $\w=d\theta$ on $\tilde{X}$. Let $\Ga=\pi_{1}(X)$. For each $\varepsilon$, $\na_{\varepsilon}=d+\sqrt{-1}\varepsilon\theta$ is a unitary connection on the trivial line bundle $L=\tilde{X}\times\mathbb{C}$. One can try made it $\Ga$-invariant by changing to a non-trivial action of $\Ga$ on $\tilde{X}\times\mathbb{C}$, i.e., setting, for $\gamma\in\Ga$,
$$\gamma^{\ast}(\tilde{x},z)=(\gamma\tilde{x},\exp^{\sqrt{-1}u(\gamma,\tilde{x})}z).$$
We want $\gamma^{\ast}\na_{\varepsilon}=\na_{\varepsilon}$, i.e., $du=-(\gamma^{\ast}\theta-\theta)$. Since $d(\gamma^{\ast}\theta-\theta)=\gamma^{\ast}\w-\w=0$, there always exists a solution $u(\gamma,\cdot)$, well defined up a constant. 

However, one cannot adjust the constant $\varepsilon\w$ to obtain an action (if so, one would get a line bundle on $X$ with curvature $\varepsilon\w$ and first Chern class $\frac{\varepsilon}{2\pi}[\w]$). This means that the action only defined on a central extension, we call this projective representation (see \cite[Charp 9]{Pan}).
\begin{definition}(\cite[Definition 9.2]{Pan})
	Let $G_{\varepsilon}$ be the subgroup of $Diff(\tilde{X}\times\mathbb{C})$ formed by maps $g$ which are linear unitary on fibers, preserve the connection $\na_{\varepsilon}$ and cover an element of $\Gamma$.	
\end{definition}
By construction we have an exact sequence
$$1\rightarrow U(1)\rightarrow G_{\varepsilon}\rightarrow\Gamma\rightarrow 1.$$
Since sections of the line bundle $\tilde{X}\times\mathbb{\C}\rightarrow\tilde{X}$ can be viewed as $U(1)$ equivalent functions on $\tilde{X}\times U(1)$, the operator $$\tilde{P}_{\varepsilon}=\frac{\sqrt{2}}{2}(d^{\na_{\varepsilon}}+\sqrt{-1}[L,(d^{\ast})^{\na_{\varepsilon}}])+\frac{\sqrt{2}}{2}((d^{\ast})^{\na_{\varepsilon}}-\sqrt{-1}[d^{\na_{\varepsilon}},\La])$$ can be view as a $G_{\varepsilon}$ invariant operator on the Hilbert space $H$ of $U(1)$ equivariant basis $L^{2}$ differential forms on $\tilde{X}\times U(1)$.  Following Theorem \ref{T7}, the $L^{2}$-index of the operator $\tilde{P}_{\varepsilon}$ satisfies
$$L^{2}Index_{G_{\varepsilon}}(\tilde{P}_{\varepsilon})=\int_{X}I_{\tilde{P}_{\varepsilon}}\wedge\exp(\frac{\varepsilon}{2\pi}[\w]),$$
where $I_{\tilde{P}}:=I^{0}+I^{1}+\cdots$ denotes the Atiyah-Bott-Patodi index form of $\tilde{P}:=\sqrt{2}(d'+d'^{\ast})$. Therefore $L^{2}Index_{G_{\varepsilon}}(\tilde{P}_{\varepsilon})$ is a polynomial in $\varepsilon$ whose highest degree term is $\int_{X}(\frac{\w}{2\pi})^{n}\neq 0$.
\begin{theorem}\label{T8}
Let $(X,J,\w)$ be a closed $2n$-dimensional special symplectic hyperbolic  manifold, $\pi:(\tilde{X},\tilde{J},\tilde{\w})\rightarrow (X,J,\w)$ the universal covering maps for $X$. Let $\Ga=\pi_{1}(X)$. Then when $n=even/odd$, we have
\begin{equation*}
\left\{
\begin{aligned}
\ker{\tilde{P}}\cap\bigoplus_{k=odd/even}\Om^{k}_{(2)}(\tilde{X})=\{0\}, \\
\ker{\tilde{P}}\cap\bigoplus_{k=even/odd}\Om^{k}_{(2)}(\tilde{X})\neq\{0\}.
\end{aligned}  
\right.
\end{equation*} 
In particular,	$$(-1)^{n}\chi(X)>0.$$
\end{theorem}
\begin{proof}
The universal covering space $\tilde{X}$ is simply-connected and the lifted symplectic form $\tilde{\w}$ is $\tilde{d}$(bounded).  For $\varepsilon$ small enough, by Theorem \ref{T6}, either $\ker\tilde{P}\cap\oplus_{k=even}\Om^{k}_{(2)}\neq{0}$ or $\ker\tilde{P}\cap\oplus_{k=odd}\Om^{k}_{(2)}\neq{0}$ since $\int_{X}[\w]^{n}\neq 0$.  By the hypothesis $N_{J}$ is bounded  from above by  $\|\theta\|_{L^{\infty}(X)}^{-1}$, we get $\pi^{\ast}(N_{J})$ is also bounded from above by $\|\theta\|_{L^{\infty}(\tilde{X})}^{-1}$ since $X$ is closed and $\pi$ is a local isometry.  When $n=even/odd$, according to Theorem \ref{T5}, the spectrum of $\tilde{P}$ lies way apart from zero from possible  the forms which belong to $\ker{\tilde{P}}\cap\oplus_{k= even/odd}\Om^{k}_{(2)}$. Therefore, $\ker\tilde{P}\cap\oplus_{k= even/odd}\Om^{k}_{(2)}\neq{0}$. By the Positivity of the Von-Neumann Dimension \cite[Section 2.1]{Pan}, we get 
$$\dim_{\Ga}(\ker{ \tilde{P}}\cap\bigoplus_{k= even/odd}\Om^{k}_{(2)})>0,\quad \dim_{\Ga}(\ker{ \tilde{P}}\cap\bigoplus_{k=odd/even}\Om^{k}_{(2)})=0.$$
Therefore following Corollary \ref{C4}, we have
$$  (-1)^{n}\chi(X)=(-1)^{n}L^{2}Index_{\Ga}\tilde{P}=\dim_{\Ga}(\ker\tilde{P}\cap\bigoplus_{k=even/odd}\Om^{k}_{(2)})>0.$$
\end{proof}

\subsection{$L^{2}$-Hodge number}\label{S1}
We assume throughout this subsection that $(X,g,J)$ is a closed almost K\"{a}hler $2n$-dimensional manifold with a Hermitian metric $g$, and $\pi:(\tilde{X},\tilde{g},\tilde{J})\rightarrow(X,g,J)$ its universal covering with $\Gamma$ as an isometric group of deck transformations. Denote by $\mathcal{H}^{k}_{(2)}(\tilde{X})$ the spaces of $L^{2}$-harmonic $k$-forms on $\Om^{k}_{(2)}(\tilde{X})$, where $\Om^{k}_{(2)}(\tilde{X})$ is space of the squared integrable $k$-forms on $(\tilde{X},\tilde{g},\tilde{J})$, and denote by $\dim_{\Gamma}\mathcal{H}^{k}_{(2)}(\tilde{X})$ the Von Neumann dimension of $\mathcal{H}^{k}_{(2)}(\tilde{X})$ with respect to $\Gamma$ \cite{Ati,Pan}. We denote by $h_{(2)}^{k}(X)$ the $L^{2}$-Hodge numbers of $X$, which are defined to be $$h_{(2)}^{k}(X):=\dim_{\Gamma}\mathcal{H}_{(2)}^{k}(\tilde{X}),\ (0\leq k\leq 2n).$$ 
It turns out that $h^{k}_{(2)}(X)$ are independent of the Hermitian metric $g$ and depend only on $X$ and $J$. By the $L^{2}$-index theorem of Atiyah \cite{Ati}, we have the following crucial identities between $\chi(X)$ and the $L^{2}$-Hodge numbers $h_{(2)}^{k}(X)$:
$$\chi(X)=\sum_{k=0}^{2n}(-1)^{k}h_{(2)}^{k}(X).$$
Now, We give a lower bound on the spectra of the Laplace operator $\De_{d}:=dd^{\ast}+d^{\ast}d$ on $L^{2}$-forms $\Om^{k}(X)$ for $k\neq n$.
\begin{theorem}\label{T3}
	Let $(X,J,\w)$ be a complete $2n$-dimensional almost K\"{a}hler manifold with $d$(bounded) symplectic form $\w$, i.e., there exists a bounded $1$-form $\theta$ such that $\w=d\theta$. There is a uniform positive constant $C$ only depends on $n$ with following significance. If the Nijenhuis tensor $N_{J}$ satisfies
$$C\|N_{J}\|_{L^{\infty}(X)}\leq\|\theta\|^{-1}_{L^{\infty}(X)},$$
	then every $L^{2}$ $k$-form $\a$ on $X$ of degree $k\neq n$ satisfies the inequality 
	$$\langle \De_{d}\a,\a\rangle\geq \frac{1}{8}\la^{2}_{0}\langle\a,\a\rangle,$$
	where $\la_{0}$ is positive constant in Theorem \ref{T4}. In particular, $$\mathcal{H}^{k}_{d}(X)=\{0\}$$
	unless $k\neq n$.
\end{theorem}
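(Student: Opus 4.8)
The plan is to compare $\De_d$ with the combination $\De_d + \De_{d^\La}$, for which Theorem \ref{T4} already supplies a spectral gap, and to absorb the difference $\De_d - \De_{d^\La}$ as an error governed by $\|N_J\|_{L^\infty(X)}$. Writing $\De_d = \frac{1}{2}(\De_d + \De_{d^\La}) + \frac{1}{2}(\De_d - \De_{d^\La})$ and recalling from Proposition \ref{P1} that $\frac{1}{2}(\De_d - \De_{d^\La}) = [\pa_-,\pa_+^\ast] + [\pa_+,\pa_-^\ast]$, the first step is to expand these cross terms. Using $\pa_- = \pa_J + \bar{A}_J$ and $\pa_+ = \bar{\pa}_J + A_J$, together with the vanishing brackets $[\bar{A}_J, A_J^\ast] = [A_J, \bar{A}_J^\ast] = 0$ of Proposition \ref{P3}(1) and the identities of Proposition \ref{P3}(3), I expect to rewrite
$$\De_d - \De_{d^\La} = 4\big{(}[\bar{\pa}_J,\bar{A}_J^\ast] + [A_J,\pa_J^\ast] + [\pa_J,A_J^\ast] + [\bar{A}_J,\bar{\pa}_J^\ast]\big{)},$$
so that every summand is a graded commutator containing exactly one of the zeroth-order operators $A_J, \bar{A}_J$ (or an adjoint). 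This is the structural heart of the argument: it makes manifest that $\De_d - \De_{d^\La}$ vanishes when $J$ is integrable and is otherwise of size $O(\|N_J\|_{L^\infty(X)})$.

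Next I would estimate $|\langle(\De_d - \De_{d^\La})\a,\a\rangle|$. Moving one factor of each bracket across the inner product and applying Cauchy--Schwarz produces a sum of terms of the shape $\|(\text{zeroth order})\a\|\cdot\|(\text{first order})\a\|$. The zeroth-order factors are controlled by $\|A_J\a\|, \|\bar{A}_J\a\| \lesssim \|N_J\|_{L^\infty(X)}\|\a\|$, which follows from the pointwise identity $(A_J(\a) + \bar{A}_J(\a))(X,Y) = \frac{1}{4}\a(N_J(X,Y))$; the first-order factors $\|\pa_J\a\|, \|\bar{\pa}_J\a\|, \dots$ are bounded, after substituting $\pa_J = \pa_- - \bar{A}_J$ (the correction being again of order $\|N_J\|_{L^\infty(X)}\|\a\|$), by $\langle(\De_d + \De_{d^\La})\a,\a\rangle^{1/2}$ via the identity $\De_d + \De_{d^\La} = 2[\pa_-,\pa_-^\ast] + 2[\pa_+,\pa_+^\ast]$ from Lemma \ref{L4}. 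With the elementary inequality $2ab \leq \varepsilon a^2 + \varepsilon^{-1}b^2$ this yields
$$|\langle(\De_d - \De_{d^\La})\a,\a\rangle| \leq C\varepsilon\,\langle(\De_d + \De_{d^\La})\a,\a\rangle + \frac{C}{\varepsilon}\|N_J\|^2_{L^\infty(X)}\|\a\|^2$$
for every $\varepsilon > 0$, exactly parallel to the estimate already carried out in the proof of Theorem \ref{T5}.

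Combining the two pieces gives, for $k \neq n$,
$$\langle\De_d\a,\a\rangle \geq \frac{1}{2}(1-C\varepsilon)\langle(\De_d+\De_{d^\La})\a,\a\rangle - \frac{C}{2\varepsilon}\|N_J\|^2_{L^\infty(X)}\|\a\|^2,$$
and inserting the lower bound $\langle(\De_d+\De_{d^\La})\a,\a\rangle \geq \la_0^2\|\a\|^2$ of Theorem \ref{T4}, then taking $\varepsilon = \frac{1}{2C}$, produces
$$\langle\De_d\a,\a\rangle \geq \frac{1}{4}\la_0^2\|\a\|^2 - C^2\|N_J\|^2_{L^\infty(X)}\|\a\|^2.$$
Since $\la_0 \geq const_n\|\theta\|^{-1}_{L^\infty(X)}$, the smallness hypothesis $C\|N_J\|_{L^\infty(X)} \leq \|\theta\|^{-1}_{L^\infty(X)}$ (after enlarging the universal constant $C$) forces the error term to be at most $\frac{1}{8}\la_0^2\|\a\|^2$, whence $\langle\De_d\a,\a\rangle \geq \frac{1}{8}\la_0^2\|\a\|^2$. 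Applying this to an $L^2$ $\De_d$-harmonic $k$-form, which on the complete manifold $X$ is closed and co-closed and hence satisfies $\langle\De_d\a,\a\rangle = 0$, forces $\a = 0$ for every $k \neq n$, i.e., $\mathcal{H}^k_{dR}(X) = \{0\}$.

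The step I expect to be the main obstacle is the bookkeeping in the first paragraph: verifying that the eight brackets arising from $[\pa_-,\pa_+^\ast] + [\pa_+,\pa_-^\ast]$ collapse to the clean four-term, zeroth-order-weighted expression displayed above. This requires careful use of the graded-commutator sign conventions, of the symmetry $[P,Q] = [Q,P]$ for two odd operators, and of Proposition \ref{P3}(1) and (3); once this identity is in hand the remaining estimates are routine and mirror the proof of Theorem \ref{T5}.
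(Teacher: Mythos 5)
Your proposal is correct and follows essentially the same route as the paper: the paper likewise isolates the error $\De_d-\tfrac12(\De_d+\De_{d^\La})$ as $2\bigl([\bar{A}_J^{\ast},\bar{\pa}_J]+[A_J,\pa_J^{\ast}]+[A_J^{\ast},\pa_J]+[\bar{A}_J,\bar{\pa}_J^{\ast}]\bigr)$ (arriving there via Proposition \ref{P7}(3) and Proposition \ref{P3}(3) rather than via Proposition \ref{P1}, but the resulting four-term identity is the same one you display), and then runs the identical Cauchy--Schwarz/$\varepsilon$-absorption argument against the spectral gap of Theorem \ref{T4}. The bookkeeping identity you flag as the main obstacle does check out, so no gap remains.
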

\begin{proof}
	Following the identity on Proposition \ref{P1}, we get 
	\begin{equation*}
	\begin{split}
	\De_{d}&=\frac{1}{2}(\De_{d}+\De_{d^{\La}})+2[\pa_{J},\bar{\pa}^{\ast}_{J}]+2[\bar{\pa}_{J},\pa^{\ast}_{J}]\\
	&=\frac{1}{2}(\De_{d}+\De_{d^{\La}})+2[\bar{A}_{J}^{\ast},\bar{\pa}_{J}]+2[A_{J},\pa^{\ast}_{J}]+2[A^{\ast}_{J},\pa_{J}]+2[\bar{A}_{J},\bar{\pa}_{J}^{\ast}],\\
	&=\frac{1}{2}(\De_{d}+\De_{d^{\La}})+2(I_{1}+I_{2}+I_{3}+I_{4})\\
	\end{split}
	\end{equation*}
	Here we use the identities (3) in Proposition \ref{P3}. By the inner product
	\begin{equation*}
	\begin{split}
	\langle I_{1}\a,\a\rangle&=\langle\bar{\pa}_{J}\a,\bar{A}_{J}\a\rangle+\langle \bar{A}^{\ast}_{J}\a,\bar{\pa}^{\ast}_{J}\a\rangle\\
	&=\langle(\bar{\pa}_{J}+A_{J})\a,\bar{A}_{J}\a\rangle-\langle A_{J}\a,\bar{A}_{J}\a\rangle+\langle \bar{A}^{\ast}_{J}\a,(\bar{\pa}^{\ast}_{J}+A^{\ast}_{J})\a\rangle-\langle \bar{A}^{\ast}_{J}\a,A^{\ast}_{J}\a\rangle\\
	&=\langle d''\a,\bar{A}_{J}\a\rangle-\langle A_{J}\a,\bar{A}_{J}\a\rangle+\langle \bar{A}^{\ast}_{J}\a,d''^{\ast}\a\rangle-\langle \bar{A}^{\ast}_{J}\a,A^{\ast}_{J}\a\rangle\\
	&=\langle d''\a,\bar{A}_{J}\a\rangle+\langle \bar{A}^{\ast}_{J}\a,d''^{\ast}\a\rangle-\langle [\bar{A}^{\ast}_{J},A_{J}]\a,\a\rangle\\
	&=\langle d''\a,\bar{A}_{J}\a\rangle+\langle \bar{A}^{\ast}_{J}\a,d''^{\ast}\a\rangle.\\
	\end{split}
	\end{equation*}
Here we use the identity $[\bar{A}^{\ast}_{J},A_{J}]=0$. Similarly, 
	\begin{equation*}
	\begin{split}
	&\langle I_{2}\a,\a\rangle=\langle d'^{\ast}\a,A^{\ast}_{J}\a\rangle+\langle A_{J}\a,d'\a\rangle\\
	&\langle I_{3}\a,\a\rangle=\langle d'\a,A_{J}\a\rangle+\langle A^{\ast}_{J}\a,d'^{\ast}\a\rangle\\
	&\langle I_{4}\a,\a\rangle=\langle d''^{\ast}\a,\bar{A}^{\ast}_{J}\a\rangle+\langle \bar{A}_{J}\a,d''\a\rangle\\
	\end{split}
	\end{equation*}
	Therefore, we obtain
	\begin{equation*}
	\begin{split}
	|\langle (I_{1}+I_{4})\a,\a\rangle|&\leq 2\|d''\a\|\cdot\|\bar{A}_{J}\a\|+2\|\bar{A}^{\ast}_{J}\a\|\cdot\|d''^{\ast}\a\|\\
	&\leq \frac{1}{4}(\|d''\a\|^{2}_{L^{2}(X)}+\|d''^{\ast}\a\|^{2}_{L^{2}(X)})+4\|N_{J}\|^{2}_{L^{\infty}(X)}\|\a\|^{2}_{L^{2}(X)} \\
	&\leq\frac{1}{4} \langle[d'',d''^{\ast}]\a,\a \rangle+4\|N_{J}\|^{2}_{L^{\infty}(X)}\|\a\|^{2}_{L^{2}(X)} \\
	&\leq\frac{1}{16}\langle(\De_{d}+\De_{d^{\La}})\a,\a\rangle+4\|N_{J}\|^{2}_{L^{\infty}(X)}\|\a\|^{2}_{L^{2}(X)},\\
	\end{split}
	\end{equation*}
	and
	\begin{equation*}
	\begin{split}
	|\langle (I_{2}+I_{3})\a,\a\rangle|&\leq 2\|d'\a\|\|A_{J}\a\|+2\|A^{\ast}_{J}\a\|\|d'^{\ast}\a\|\\
	&\leq \frac{1}{4}(\|d'\a\|^{2}_{L^{2}(X)}+\|d'^{\ast}\a\|^{2}_{L^{2}(X)})+4\|N_{J}\|^{2}_{L^{\infty}(X)}\|\a\|^{2}_{L^{2}(X)} \\
	&\leq\frac{1}{4} \langle[d',d'^{\ast}]\a,\a \rangle+4\|N_{J}\|^{2}_{L^{\infty}(X)}\|\a\|^{2}_{L^{2}(X)} \\
	&\leq\frac{1}{16}\langle(\De_{d}+\De_{d^{\La}})\a,\a\rangle+4\|N_{J}\|^{2}_{L^{\infty}(X)}\|\a\|^{2}_{L^{2}(X)},\\
	\end{split}
	\end{equation*}
	where $C$ is a positive constant only depend on $n$.
	Combining above inequalities, we get
	\begin{equation*}
	\begin{split}
	\langle\De_{d}\a,\a\rangle
	&\geq\frac{1}{2}\langle(\De_{d}+\De_{d^{\La}})\a,\a\rangle-2|\langle (I_{1}+I_{4})\a,\a\rangle|-2|\langle (I_{2}+I_{3})\a,\a\rangle|\\
	&\geq \frac{1}{4}\langle(\De_{d}+\De_{d^{\La}})\a,\a\rangle-16C\|N_{J}\|^{2}_{L^{\infty}(X)}\|\a\|^{2}_{L^{2}(X)}\\
	&\geq(\frac{1}{4}\la^{2}_{0}-16C\|N_{J}\|^{2}_{L^{\infty}(X)})\|\a\|^{2}_{L^{2}(X)}.\\
	\end{split}
	\end{equation*}
If the Nijenhuis tensor $N_{J}$ satisfies  $$16C\|N_{J}\|^{2}_{L^{\infty}}\leq\frac{1}{8}\la^{2}_{0},$$ then
	$$\langle\De_{d}\a,\a\rangle\geq\frac{\la^{2}_{0}}{8}\|\a\|^{2}_{L^{2}(X)}.$$
\end{proof}
Let $L\rightarrow X$ be a vector bundle equipped with a Hermitian metric and Hermitian connection $\na$. Then there is an induced exterior differential $d^{\na}$ on $\Om^{\ast}(X)\otimes L$. If $\mathcal{D}=d^{\na}+(d^{\ast})^{\na}$, then Atiyah-Singer's index theorem states
$$Index(\mathcal{D})=\int_{X}\mathcal{L}_{X}\wedge Ch(L).$$
Here $\mathcal{L}_{X}$ is Hizebruch's class, 
$$\mathcal{L}_{X}=1+\cdots+e(X)$$
where $1\in H^{0}(X)$ and $e(X)\in H^{\dim X}(X)$ is the Euler class.  For each $\varepsilon$, $\na_{\varepsilon}=d+\sqrt{-1}\varepsilon\theta$ is a unitary connection on the trivial line bundle $L=\tilde{X}\times\mathbb{C}$. The operator $\mathcal{D}_{\varepsilon}:=d^{\na_{\varepsilon}}+(d^{\ast})^{\na_{\varepsilon}}$ can be view as a $G_{\varepsilon}$ operator on the Hilbert space $H$ of $U(1)$ equivalent basis $L^{2}$ differential forms on $\tilde{X}\times U(1)$ \cite{Pan}.
\begin{theorem}(\cite[Theorem 9.3]{Pan})
The operator $\tilde{D}_{\varepsilon}$ has a finite projective $L^{2}$ index give by
$$L^{2}Index_{G_{\varepsilon}}(\tilde{D}_{\varepsilon})=\int_{X}\mathcal{L}_{X}\wedge\exp(\frac{\varepsilon}{2\pi}[\w]).$$
\end{theorem}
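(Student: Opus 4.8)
The plan is to follow Atiyah's heat-kernel proof of the $L^{2}$-index theorem, carried out $G_{\varepsilon}$-equivariantly. First I would record that $\tilde{D}_{\varepsilon}=d^{\na_{\varepsilon}}+(d^{\ast})^{\na_{\varepsilon}}$ is a first-order, formally self-adjoint elliptic operator on $\Om^{\ast}(\tilde{X})\otimes L$ which, by the construction of $G_{\varepsilon}$, commutes with the $G_{\varepsilon}$-action lifting the $\Ga$-action on $\tilde{X}$ together with the fiberwise $U(1)$-action on $L=\tilde{X}\times\C$. Because $X=\tilde{X}/\Ga$ is compact and the fiber $U(1)$ is compact, the commutant of $G_{\varepsilon}$ acting on the Hilbert space $H$ of $U(1)$-equivariant $L^{2}$-forms on $\tilde{X}\times U(1)$ carries a finite, faithful, normal trace $\mathrm{tr}_{G_{\varepsilon}}$, hence a well-defined von Neumann dimension. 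The projective $L^{2}$-index is then $L^{2}\mathrm{Index}_{G_{\varepsilon}}(\tilde{D}_{\varepsilon})=\dim_{G_{\varepsilon}}\ker\tilde{D}_{\varepsilon}-\dim_{G_{\varepsilon}}\mathrm{coker}\,\tilde{D}_{\varepsilon}$, finiteness following from the fact that $e^{-t\tilde{D}_{\varepsilon}^{\ast}\tilde{D}_{\varepsilon}}$ and $e^{-t\tilde{D}_{\varepsilon}\tilde{D}_{\varepsilon}^{\ast}}$ are $G_{\varepsilon}$-trace-class.

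Next I would invoke the McKean--Singer identity in this setting: for every $t>0$,
$$L^{2}\mathrm{Index}_{G_{\varepsilon}}(\tilde{D}_{\varepsilon})=\mathrm{tr}_{G_{\varepsilon}}\big(e^{-t\tilde{D}_{\varepsilon}^{\ast}\tilde{D}_{\varepsilon}}\big)-\mathrm{tr}_{G_{\varepsilon}}\big(e^{-t\tilde{D}_{\varepsilon}\tilde{D}_{\varepsilon}^{\ast}}\big),$$
the right-hand side being independent of $t$ since the nonzero spectra of the two Laplacians match with equal $G_{\varepsilon}$-multiplicities. The structural fact I would use is that the $G_{\varepsilon}$-trace of a $G_{\varepsilon}$-invariant smoothing operator with Schwartz kernel $k(x,y)$ equals $\int_{\mathcal{F}}\mathrm{str}\,k(x,x)\,dx$, where $\mathcal{F}$ is a fundamental domain for $\Ga$ in $\tilde{X}$; the central $U(1)$ acts trivially on the diagonal supertrace, so passing to the extension does not disturb this localization.

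The heart of the argument is the small-time asymptotics. Since the heat kernel of $\tilde{D}_{\varepsilon}^{2}$ is built locally from the Riemannian geometry of $\tilde{g}=\pi^{\ast}g$ and the curvature of $\na_{\varepsilon}$, and since $\pi$ is a local isometry while the curvature $F_{\na_{\varepsilon}}=\sqrt{-1}\varepsilon\,\w$ is $\Ga$-invariant and descends to $X$, the on-diagonal supertrace $\mathrm{str}\,k_{t}(x,x)$ is a $\Ga$-invariant density computable downstairs on $X$. Applying the local index theorem for the Gauss--Bonnet (Euler) operator $d+d^{\ast}$ twisted by $(L,\na_{\varepsilon})$---via Getzler rescaling or the Atiyah--Bott--Patodi method---I would let $t\to0$ to obtain
$$\lim_{t\to0}\mathrm{str}\,k_{t}(x,x)\,dx=\big[\mathcal{L}_{X}\wedge\exp\big(\tfrac{\varepsilon}{2\pi}[\w]\big)\big]_{\mathrm{top}},$$
where $\mathcal{L}_{X}$ is the index density of $d+d^{\ast}$ with top term the Euler form and $\exp(\tfrac{\varepsilon}{2\pi}[\w])$ is the Chern character of $(L,\na_{\varepsilon})$. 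Integrating over $\mathcal{F}\cong X$ and combining with McKean--Singer yields the stated formula.

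I expect the main obstacle to be the projective, central-extension bookkeeping. One must verify that the $S^{1}$-extension $G_{\varepsilon}$ still produces a finite von Neumann trace and a well-defined real-valued index, since $G_{\varepsilon}$ is non-discrete and the line bundle $(L,\na_{\varepsilon})$ need not descend to $X$. The resolution---following Atiyah's observation, already noted in the excerpt, that his proof is insensitive to replacing $\Ga$ by a compact central extension $\bar{\Ga}$---is that averaging over the compact fiber $U(1)$ produces the required finite trace and leaves the local heat-kernel computation untouched; everything else is the standard heat-equation index machinery executed $G_{\varepsilon}$-invariantly.
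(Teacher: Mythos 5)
The paper offers no proof of this statement at all---it is quoted directly from Pansu's notes (Theorem 9.3), and the argument there is precisely the one you outline: Atiyah's heat-kernel proof of the $L^{2}$-index theorem run $G_{\varepsilon}$-equivariantly, using the McKean--Singer identity for the finite von Neumann trace on the commutant and the local index density $\mathcal{L}_{X}\wedge\exp(\tfrac{\varepsilon}{2\pi}\omega)$, with the compact central $U(1)$-extension handled exactly as you describe (the paper itself remarks, in the discussion preceding its Theorem 4.8, that Atiyah's proof ``does not change a bit'' under such an extension). Your proposal is correct and takes essentially the same approach as the cited source.
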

Now, we begin to proof the Singer conjecture under the special symplectic hyperbolic case.
\begin{proof}[\textbf{Proof of Theorem \ref{T2}}]
This number is a polynomial in $\varepsilon$ whose highest degree term is $\int_{X}(\frac{\w}{2\pi})^{n}\neq 0$ thus for $\varepsilon$ small enough, $\tilde{D}_{\varepsilon}$ has a non-zero $L^{2}$ kernel. By construction, $\tilde{D}_{\varepsilon}$ is an $\varepsilon$-small perturbation of $\widetilde{d+d^{\ast}}$, so $\widetilde{d+d^{\ast}}$ is not invertible. For any $k\neq n$, $\mathcal{H}^{k}_{(2)}(\tilde{X})=\{0\}$, i.e., $h_{(2)}^{k}(X)=0$. Therefore, we get $\mathcal{H}^{n}_{(2)}(\tilde{X})\neq\{0\}$, i.e, $h_{(2)}^{n}(X)>0$. Hence
$$(-1)^{n}\chi(X)=(-1)^{n}\sum_{k=0}^{2n}(-1)^{k}h^{k}_{(2)}(X)=h^{n}_{(2)}(X)>0.$$
\end{proof}

\section*{Acknowledgements}
We would like to thank Professor H.Y. Wang for drawing our attention to the symplectic cohomology. I would like to thank S.O. Wilson and J. Cirici for helpful comments regarding their article \cite{CW1,CW2}. We would also like to thank the anonymous referee for careful reading of my manuscript and helpful comments. This work is supported by the National Natural Science Foundation of China (Nos. 12271496, 11801539) and the Youth Innovation Promotion Association CAS, the Fundamental Research Funds of the Central Universities, the USTC Research Funds of the Double First-Class Initiative.

\bigskip
\footnotesize

\end{document}